\def\@tocline#1#2#3#4#5#6#7{\relax
  \ifnum #1>\c@tocdepth 
  \else
    \par \addpenalty\@secpenalty\addvspace{#2}%
    \begingroup \hyphenpenalty\@M
    \@ifempty{#4}{%
      \@tempdima\csname r@tocindent\number#1\endcsname\relax
    }{%
      \@tempdima#4\relax
    }%
    \parindent\z@ \leftskip#3\relax \advance\leftskip\@tempdima\relax
    \rightskip\@pnumwidth plus4em \parfillskip-\@pnumwidth
    #5\leavevmode\hskip-\@tempdima
      \ifcase #1
       \or\or \hskip 2em \or \hskip 2em \else \hskip 3em \fi%
      #6\nobreak\relax
    \dotfill\hbox to\@pnumwidth{\@tocpagenum{#7}}\par
    \nobreak
    \endgroup
  \fi}
\theoremstyle{plain}
\newtheorem{introthm}{Theorem}[]
\newtheorem{theorem}{Theorem}[section]
\newtheorem{lemma}[theorem]{Lemma}
\newtheorem{corollary}[theorem]{Corollary}
\newtheorem{proposition}[theorem]{Proposition}
\theoremstyle{definition}
\newtheorem{remark}[theorem]{Remark}
\newtheorem{notation}[theorem]{Notation}
\newtheorem{definition}[theorem]{Definition}
\newtheorem{examples}[theorem]{Examples}
\newtheorem{hypothesis}[theorem]{Hypothesis}
\newcommand{\tensor}{\otimes}
\newcommand{\Hom}{{\rm Hom}}
\newcommand{\Spec}{{\rm Spec \,}}
\newcommand{\sF}{{\mathcal F}}
\newcommand{\sH}{{\mathcal H}}
\newcommand{\sS}{{\mathcal S}}
\newcommand{\sU}{{\mathcal U}}
\newcommand{\sX}{{\mathcal X}}
\newcommand{\sY}{{\mathcal Y}}
\newcommand{\A}{{\mathbb A}}
\newcommand{\G}{{\mathbb G}}
\newcommand{\N}{{\mathbb N}}
\renewcommand{\P}{{\mathbb P}}
\newcommand{\Sing}{{\rm Sing_*^{\A^1}}}
\def\<{\langle}
\def\>{\rangle} 
\def\-{\overline} 
\def\~{\widetilde}
\def\^{\widehat}
\begin{document}

\title{$\A^1$-connectedness in reductive algebraic groups}

\author{Chetan Balwe}
\address{Department of Mathematics, Indian Institute of Science Education and Research (IISER), Knowledge City, Sector-81, Mohali 140306, India.}

\email{cbalwe@iisermohali.ac.in}

\author{Anand Sawant}
\address{Mathematisches Institut, Ludwig-Maximilians Universit\"at, Theresienstr. 39, 
D-80333 M\"unchen, Germany.}
\email{sawant@math.lmu.de}
\date{}
\subjclass[2010]{14F42, 14L15, 55R10(Primary)}

\begin{abstract}
Using sheaves of $\A^1$-connected components, we prove that the Morel-Voevodsky singular construction on a reductive algebraic group fails to be $\A^1$-local if the group does not satisfy suitable isotropy hypotheses.  As a consequence, we show the failure of $\A^1$-invariance of torsors for such groups on smooth affine schemes over infinite perfect fields.  We also characterize $\A^1$-connected reductive algebraic groups over a field of characteristic $0$.
\end{abstract}

\maketitle

\section{Introduction}
\label{section Introduction}

Let us fix a base field $k$ and let $\sH(k)$ denote the $\A^1$-homotopy category of schemes over $k$ developed by Morel and Voevodsky \cite{Morel-Voevodsky}.  This category is constructed by first enlarging the category of smooth (finite type, separated) schemes over $k$ to the category of simplicial sheaves on the big Nisnevich site of smooth schemes over $k$ and then taking a suitable localization.  Given a smooth scheme $X$ over $k$, one may ask if the set of morphisms $\Hom_{\sH(k)}(U, X)$ has a geometric description, at least when $U$ is a smooth henselian local scheme.  In particular, one may ask if the set $\Hom_{\sH(k)}(U, X)$ is in bijection with the equivalence classes of morphisms of schemes $U \to X$ by \emph{naive} $\A^1$-homotopies.  This question is closely related to the behaviour of the Morel-Voevodsky singular construction $\Sing X$ (for precise definitions, see Section \ref{section preliminaries A1-connectedness}).  More precisely, the above question has an affirmative answer if $\Sing X$ is \emph{$\A^1$-local}, that is, discrete as an object of $\sH(k)$ (see Definition \ref{definition A1-local}).  However, there exist (even smooth, projective) varieties $X$ for which $\Sing X$ is not $\A^1$-local \cite[\textsection 4.1]{Balwe-Hogadi-Sawant}.

In this paper, we will study this question for a reductive algebraic group $G$ over an infinite perfect field $k$.  Under a suitable isotropy hypothesis on $G$, it was shown by Asok, Hoyois and Wendt \cite{Asok-Hoyois-Wendt-2} that $\Sing G$ is $\A^1$-local.  This isotropy hypothesis on reductive algebraic $k$-groups was introduced in \cite{Raghunathan-1989}:
\[
\begin{split}
(\ast)~ & \text{Every almost $k$-simple component of the derived group $G_{\rm der}$ of $G$ contains}\\ 
& \text{a $k$-subgroup scheme isomorphic to $\G_m$.}
\end{split}
\]
Asok, Hoyois and Wendt obtain the $\A^1$-locality of $\Sing G$ for a reductive algebraic group satisfying the hypothesis $(\ast)$ by showing affine homotopy invariance of Nisnevich locally trivial torsors under such groups.  More precisely, they show that for every smooth affine scheme $U$ over $k$ and a group $G$ satisfying $(\ast)$, the natural map 
\[
H^1_{\rm Nis}(U, G) \to H^1_{\rm Nis}(U \times \A^n, G)
\]
is a bijection, for every $n \geq 0$.  Special cases of this result (such as the case $G = GL_n$ \cite{Lindel} and the case where $G$ satisfies $(\ast)$ and $U = \Spec k$ \cite{Raghunathan-1989}) were known much earlier.  

Examples of anisotropic groups $G$ for which $\Sing G$ fails to be $\A^1$-local were obtained in \cite{Balwe-Sawant}.  For these examples, the failure of affine homotopy invariance of $G$-torsors was noted in \cite{Asok-Hoyois-Wendt-2}.  Explicit examples of failure of affine homotopy invariance of the presheaf $H^1_{\rm Nis}(-, G)$ were already known in many cases, see \cite{Ojanguren-Sridharan}, \cite{Parimala} for the first examples; and \cite[Theorem B]{Raghunathan-1989} for a general statement excluding certain groups that are not of classical type.  See Section \ref{subsection failure A1-invariance} for more details.  In this paper, we generalize these results as follows:

\begin{introthm}[Theorem \ref{theorem equivalence}]
\label{intro theorem 1}
Let $G$ be a reductive algebraic group over an infinite perfect field $k$.  Then the following conditions are equivalent: 
\begin{itemize}
\item[(1)] $\Sing G$ is $\A^1$-local;
\item[(2)] $G$ satisfies the isotropy hypothesis $(\ast)$ (Hypothesis \ref{hypothesis isotropy});
\item[(3)] The presheaf $H^1_{\rm Nis}(-, G)$ is $\A^1$-invariant on smooth affine schemes over $k$.
\end{itemize}
\end{introthm}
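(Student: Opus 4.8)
The plan is to establish the equivalences by proving the cycle of implications $(2) \Rightarrow (1) \Rightarrow (3) \Rightarrow (2)$, where the first implication is already available in the literature. For $(2) \Rightarrow (1)$, I would simply invoke the theorem of Asok, Hoyois and Wendt recalled in the introduction: if $G$ satisfies $(\ast)$, then affine homotopy invariance of Nisnevich-locally trivial $G$-torsors holds, and this is known to imply that $\Sing G$ is $\A^1$-local. For $(1) \Rightarrow (3)$, the idea is that $\A^1$-locality of $\Sing G$ identifies $\Hom_{\sH(k)}(U, BG)$ with the naive picture: since $G$ is a group, $\Sing G$ being $\A^1$-local should force $BG$ (or rather its relevant delooping) to behave well, and on a smooth affine $U$ over an infinite perfect field one recovers $H^1_{\rm Nis}(U, G) \cong H^1_{\rm Nis}(U \times \A^n, G)$. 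Here I would lean on the comparison between Nisnevich-locally trivial torsors and maps into a classifying space in $\sH(k)$, together with the fact that $\A^1$-locality of $\Sing G$ propagates to the classifying space level via the standard bar construction.

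The substantive new content, and the main obstacle, is the implication $(3) \Rightarrow (2)$ — equivalently, its contrapositive: if $G$ does \emph{not} satisfy $(\ast)$, then $H^1_{\rm Nis}(-, G)$ fails to be $\A^1$-invariant on smooth affine $k$-schemes. The strategy I would follow is to reduce to the case of an almost $k$-simple anisotropic factor of $G_{\rm der}$ that contains no copy of $\G_m$, and for such a group produce an explicit smooth affine $U$ with $H^1_{\rm Nis}(U, G) \to H^1_{\rm Nis}(U \times \A^1, G)$ not bijective. The natural tool is the sheaf of $\A^1$-connected components $\pi_0^{\A^1}(\Sing G)$, or more precisely the failure of $\Sing G$ to be $\A^1$-local detected by a nontrivial class in the relevant $\pi_0$ or $\pi_1$ sheaf: one shows that for such $G$ there is a smooth henselian local $U$ and a naive $\A^1$-homotopy class of maps $U \to G$ that is not captured by the expected geometric description, and then promotes this to a failure of $\A^1$-invariance of torsors by the classifying-space dictionary. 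Concretely I would examine the structure theory of anisotropic almost-simple groups (using that over an infinite perfect field a reductive group without a $\G_m$-subgroup in a given almost-simple factor is anisotropic over $k$ in that factor), and use known instances — e.g. inner forms of type $A$ coming from division algebras, as in the Ojanguren--Sridharan and Parimala examples, and Raghunathan's general statement — as the geometric input.

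The main technical obstacle is making the contrapositive uniform across \emph{all} reductive $G$ violating $(\ast)$, including the anisotropic groups of exceptional or triality type that are excluded from Raghunathan's Theorem B. Here I expect to need the full strength of the $\A^1$-connected components machinery rather than explicit torsor constructions: the plan is to show directly that $\Sing G$ is not $\A^1$-local by exhibiting that the canonical map $\Sing G \to L_{\A^1}(\Sing G)$ is not an isomorphism on $\pi_0$ evaluated at a suitable field-valued point or smooth local scheme, using that an anisotropic almost-simple factor has no nonconstant $\A^1$-chains through the identity, which obstructs the expected surjectivity. Translating this obstruction back through the bar construction to $H^1_{\rm Nis}(-, G)$ — and checking that the witnessing scheme can be taken smooth affine, which is where the infinite perfect field hypothesis enters via Gabber-type presentation and affine replacement results — is the delicate step, and I would treat it as the heart of the paper, with the remaining implications being comparatively formal.
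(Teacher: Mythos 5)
Your overall cycle $(2)\Rightarrow(1)\Rightarrow(3)\Rightarrow(2)$ has a gap at $(1)\Rightarrow(3)$. The claim that $\A^1$-locality of $\Sing G$ ``propagates to the classifying space level via the standard bar construction'' is not a formal step: for a sheaf of groups $G$, the classifying space is $\A^1$-local precisely when $G$ is strongly $\A^1$-invariant, i.e.\ when \emph{both} $G$ and $H^1_{\rm Nis}(-,G)$ are $\A^1$-invariant --- so deducing $\A^1$-invariance of $H^1_{\rm Nis}(-,G)$ this way presupposes exactly what you want to prove. The implications actually available in Asok--Hoyois--Wendt run the other way: $(3)\Rightarrow(1)$ is their representability theorem, and $(2)\Rightarrow(3)$ is their deep geometric theorem (building on Raghunathan, Colliot-Th\'el\`ene--Ojanguren, etc.); no direct proof of $(1)\Rightarrow(3)$ bypassing the isotropy hypothesis is known. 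The paper instead proves the new implication $(1)\Rightarrow(2)$ (in contrapositive form) and quotes AHW for $(2)\Rightarrow(3)$ and $(3)\Rightarrow(1)$; failure of $\A^1$-invariance of torsors for groups violating $(\ast)$ then comes out existentially as the contrapositive of $(3)\Rightarrow(1)$, not from any explicit torsor on an explicit smooth affine $U$ (the Ojanguren--Sridharan/Parimala/Raghunathan constructions you invoke do not cover all anisotropic types, as you yourself note).

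The second and more substantive gap is in the heart of your $(3)\Rightarrow(2)$ (equivalently the paper's $(1)\Rightarrow(2)$): the ``reduction to an anisotropic almost $k$-simple factor of $G_{\rm der}$'' is not a product decomposition. A general reductive $G$ is related to $G_{\rm der}\times\mathrm{rad}(G)$, and $G_{\rm der}$ to its simply connected cover $G_{\rm sc}=G_1\times\cdots\times G_n$, only through \emph{central isogenies}, and one needs to transfer non-$\A^1$-locality of $\Sing$ from the cover to the quotient. This is precisely the paper's Proposition \ref{proposition central isogeny}, whose proof requires genuinely new input: the fppf classifying space $B_{\rm fppf}\mu$ of the (multiplicative-type) kernel $\mu$ is shown to be $\A^1$-local (Lemma \ref{lemma classifying spaces}), the fiber sequence $G'\to G\to B_{\rm fppf}\mu$ is upgraded to an $\A^1$-fiber sequence via Morel's theorem on strongly $\A^1$-invariant sheaves, and a five-lemma plus $\A^1$-Whitehead argument concludes. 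Your proposal contains no mechanism for this step; ``no nonconstant $\A^1$-chains through the identity'' only gives, via the Balwe--Sawant results ($\sS(G_i)(k)\simeq G_i(k)$ for anisotropic $G_i$, $\pi_0^{\A^1}(G_i)(k)\simeq G_i(k)/R$, plus unirationality producing distinct $R$-equivalent points), the non-locality of $\Sing G_i$ for the simply connected anisotropic factors, and without the central-isogeny proposition this does not descend to $G_{\rm der}$ or to $G$.
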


Thus, Theorem \ref{intro theorem 1} compares a motivic statement, a group-theoretic statement and a cohomological statement.  In view of \cite{Asok-Hoyois-Wendt-2}, the main results of this paper are Theorem \ref{theorem failure of A1-locality} and Proposition \ref{proposition failure of A1-invariance}.  Although Theorem \ref{intro theorem 1} addresses all reductive groups that fail to satisfy the isotropy hypothesis $(\ast)$ uniformly, our proof of failure of affine homotopy invariance for groups that do not satisfy hypothesis $(\ast)$ is existential.  The main ingredient in the proof of failure of $\A^1$-locality of the singular construction for such groups is the behaviour of $\A^1$-locality of the singular construction under central isogenies, which is described in Section \ref{subsection failure A1-locality}.

In \cite{Balwe-Sawant}, it was shown that sections over fields of the sheaf of $\A^1$-connected components of a semisimple simply connected group agree with the group of its $R$-equivalence classes.  However, this is not the case if we drop the hypothesis of simple connectedness.  This can be seen via our following characterization of $\A^1$-connected reductive algebraic groups.

\begin{introthm}[Theorem \ref{theorem reductive connectedness}]
\label{intro theorem 2}
Let $G$ be a reductive algebraic group over a field $k$ of characteristic $0$.  Then $G$ is $\A^1$-connected if and only if $G$ is semisimple, simply connected and every almost $k$-simple factor of $G$ is $R$-trivial. 
\end{introthm}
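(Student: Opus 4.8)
The plan is to prove both directions of the equivalence, exploiting the structure theory of reductive groups together with the machinery of $\A^1$-connected components developed in the cited references.

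\smallskip

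\emph{The ``if'' direction.} Suppose $G$ is semisimple, simply connected, and every almost $k$-simple factor is $R$-trivial. Since $G$ is a product of Weil restrictions of its almost $k$-simple factors, and since the product of $\A^1$-connected spaces is $\A^1$-connected, I would first reduce to the case where $G$ itself is almost $k$-simple and simply connected. Such a $G$ automatically satisfies the isotropy hypothesis $(\ast)$ (a simply connected almost simple group, being $R$-trivial, is in particular isotropic, so it contains a copy of $\G_m$), hence by Theorem \ref{intro theorem 1} the space $\Sing G$ is $\A^1$-local. Therefore $\pi_0^{\A^1}(G)$, as a Nisnevich sheaf, is computed by $\pi_0$ of $\Sing G$, and its sections over any finitely generated separable field extension $L/k$ are precisely the naive $\A^1$-homotopy classes $G(L)/\!\sim$, which by the results of \cite{Balwe-Sawant} (sections over fields of the sheaf of $\A^1$-connected components of a semisimple simply connected group coincide with $R$-equivalence classes) equal $G(L)/R$. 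By $R$-triviality this is trivial for all such $L$. Finally, since $\pi_0^{\A^1}(G)$ is an $\A^1$-invariant sheaf whose sections over all finitely generated field extensions are trivial, it is the trivial sheaf, so $G$ is $\A^1$-connected. The one point requiring care here is the passage from ``trivial on fields'' to ``trivial as a sheaf''; in characteristic $0$ over a perfect field this follows because an $\A^1$-invariant Nisnevich sheaf is determined by its sections on finitely generated field extensions (together with a specialization/Gersten-type argument), but I would want to check that $\pi_0^{\A^1}(G)$ is a sheaf of the relevant strictly $\A^1$-invariant type, or else argue directly via the universal property of $\A^1$-localization.

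\smallskip

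\emph{The ``only if'' direction.} Conversely, suppose $G$ is $\A^1$-connected. I would first show $G$ must satisfy $(\ast)$: if not, then by Theorem \ref{intro theorem 1} (specifically the new content, Theorem \ref{theorem failure of A1-locality}) $\Sing G$ fails to be $\A^1$-local, and in fact the analysis in Section \ref{subsection failure A1-locality} produces a nontrivial sheaf of $\A^1$-connected components, contradicting $\A^1$-connectedness. Granting $(\ast)$, $\Sing G$ is $\A^1$-local, so $\pi_0^{\A^1}(G)$ is computed naively. Next I would rule out a nontrivial central torus or nontrivial fundamental group. If $G$ has a nontrivial central torus quotient $G \to T$ with $T$ a nontrivial torus, then since $\pi_0^{\A^1}$ is a functor and a split torus $\G_m$ is not $\A^1$-connected (its sheaf of connected components surjects onto $\G_m$ itself, or one detects this via units $\sO^\times$), $G$ cannot be $\A^1$-connected; the anisotropic case needs the observation that a nontrivial torus always has a nontrivial $\A^1$-connected-components sheaf. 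Hence $G$ is semisimple. If $G$ is semisimple but not simply connected, let $\tilde G \to G$ be the universal cover with nontrivial central kernel $\mu$; a central isogeny induces on $\pi_0^{\A^1}$ a connecting map, and the failure of surjectivity of $\tilde G(L) \to G(L)$ modulo $R$-equivalence (controlled by a Galois cohomology group $H^1(L,\mu)$ which is nontrivial for suitable $L$) shows $\pi_0^{\A^1}(G)$ is nontrivial --- again using the central isogeny behaviour from Section \ref{subsection failure A1-locality}. So $G$ is semisimple simply connected, and then by the ``if''-direction reasoning $\pi_0^{\A^1}(G)(L) = G(L)/R$, which must be trivial for all finitely generated $L/k$; applying this to each almost $k$-simple factor (whose $R$-equivalence classes inject into those of $G$ after base change, or are a direct factor) gives $R$-triviality of each factor.

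\smallskip

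\emph{Main obstacle.} The delicate step, in both directions, is the precise relationship between the \emph{sheaf} $\pi_0^{\A^1}(G)$ and the \emph{group of $R$-equivalence classes} $G(L)/R$ over field extensions $L$ --- and, relatedly, the claim that a sheaf of $\A^1$-connected components which is trivial on all finitely generated separable extensions of $k$ is globally trivial. The former is exactly where the simple connectedness hypothesis enters (via \cite{Balwe-Sawant}), and the non-simply-connected case is handled by the central isogeny techniques of Section \ref{subsection failure A1-locality}; I expect the bookkeeping around Weil restriction (an almost $k$-simple factor is $R_{L/k} H$ for $H$ absolutely almost simple over $L$) and the compatibility of $R$-equivalence, $R$-triviality, and $\A^1$-connectedness under such restrictions of scalars to be the most technically involved part to write out carefully, since one must verify that $R$-triviality of $H$ over all extensions of $L$ is equivalent to $R$-triviality of $R_{L/k}H$ over all extensions of $k$.
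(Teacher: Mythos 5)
Your argument breaks at the point where you claim, in both directions, that the groups in play satisfy the isotropy hypothesis $(\ast)$: the assertion ``a simply connected almost simple group, being $R$-trivial, is in particular isotropic'' is false, and so is its mirror image ``$\A^1$-connected $\Rightarrow$ $(\ast)$'' in the converse. Take $G = SL_1(D)$ for a quaternion division algebra $D$: it is anisotropic, yet $R$-trivial (its $R$-equivalence classes are $SK_1(D_F)$, which vanish for all $F$ by Wang's theorem), and by the theorem you are proving it is $\A^1$-connected. So you cannot invoke Theorem \ref{intro theorem 1} to get $\A^1$-locality of $\Sing G$, and the route ``$\pi_0^{\A^1}(G)(L) = $ naive classes $= G(L)/R$'' is unavailable as you set it up. The repair is that no isotropy is needed: the paper's Theorem \ref{theorem IMRN} gives $\pi_0^{\A^1}(G)(F)\simeq G(F)/R$ for semisimple simply connected groups \emph{including the anisotropic case} (via \cite[Theorem 4.2]{Balwe-Sawant}), and this is exactly what the paper feeds into \cite[Lemma 6.1.3]{Morel-connectivity} for the ``if'' direction. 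Relatedly, in the ``only if'' direction your claim that the analysis of Section \ref{subsection failure A1-locality} ``produces a nontrivial sheaf of $\A^1$-connected components'' when $(\ast)$ fails is not what that analysis shows: failure of $\A^1$-locality of $\Sing G$ only means the epimorphism $\sS(G)\to\pi_0^{\A^1}(G)$ is not an isomorphism, and $\pi_0^{\A^1}(G)$ can perfectly well be trivial (again $SL_1(D)$).

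The second genuine gap is in your exclusion of a nontrivial fundamental group. Saying that the connecting map lands in $H^1(L,\mu)$ ``which is nontrivial for suitable $L$'' proves nothing: you must exhibit an actual section of $G$ whose image under $G(L)\to H^1_{\rm fppf}(L,\mu)$ is a nontrivial class, and that is the crux. The paper does this (after first reducing to an algebraically closed base field via Theorem \ref{theorem algebraically closed} and then descending, rather than working directly over $k$ as you do) by evaluating at the generic point $\eta\in G(k(G))$: $\A^1$-connectedness forces the $\mu$-torsor $G_{\rm sc}\times_{G,\eta}\Spec k(G)$ to be trivial, a section of it gives field homomorphisms $k(G)\to k(G_{\rm sc})\to k(G)$ composing to the identity, hence the isogeny $G_{\rm sc}\to G$ has degree one. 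Your exclusion of a nontrivial coradical can be made to work without $(\ast)$ (a torus is $\A^1$-rigid, so the quotient map $G\to{\rm corad}(G)$ would factor through $\pi_0^{\A^1}(G)=\ast$, which is absurd over $\bar k$), but as written it too leans on the unavailable $\A^1$-locality. The Weil-restriction bookkeeping you flag at the end is handled in the paper exactly as you anticipate, via $R_{k_i/k}(G_i)(F)/R\simeq G_i(F\otimes_k k_i)/R$ and $\pi_0^{\A^1}(R_{k_i/k}G_i)(F)\simeq\pi_0^{\A^1}(G_i)(F\otimes_k k_i)$, so that part is fine; the two gaps above are what need new arguments.
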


In particular, this shows that one cannot have $\A^1$-connected components of a non-simply connected group agree with its $R$-equivalence classes in general.  Indeed, any split semisimple group is a rational variety \cite[V.15.8]{Borel} and hence $R$-trivial; however, its $\A^1$-connected components cannot be trivial unless the group is simply connected.  As a consequence of Theorem \ref{intro theorem 2}, we observe that $\A^1$-connected components of a semisimple group with an $\A^1$-connected, simply connected central cover form a sheaf of abelian groups (see Proposition \ref{proposition pi0 abelian}).

We now outline the contents of this paper.  In Section \ref{section preliminaries A1-connectedness}, we recollect preliminaries about $\A^1$-connectedness.  In Section \ref{section preliminaries algebraic groups}, we recall basic notions about reductive algebraic groups and describe $\A^1$-connected components of semisimple, simply connected algebraic groups as an immediate consequence of results of \cite{Balwe-Sawant}.  Section \ref{section failure} is devoted to the proof of Theorem \ref{intro theorem 1}.  In Section \ref{section characterization of A1-connectedness}, we characterize $\A^1$-connected reductive algebraic groups over a field of characteristic $0$.  We then obtain the abelian-ness of the sheaf of $\A^1$-connected components of certain algebraic groups as an application.

\section{Preliminaries on \texorpdfstring{$\A^1$}{A1}-connected components of schemes}
\label{section preliminaries A1-connectedness}
In this section, we briefly recall some definitions from the $\A^1$-homotopy theory and some basic properties, particularly regarding $\A^1$-connectedness. We will begin by briefly reviewing the construction of the \emph{$\A^1$-homotopy category} from \cite{Morel-Voevodsky}.  

Let $k$ be a field.  Let $Sm/k$ denote the big Nisnevich site of smooth, separated, finite-type schemes over $k$. We begin with the category of simplicial sheaves over $Sm/k$.
A morphism $\sX \to \sY$ of simplicial sheaves is a \emph{local weak equivalence} if it induces a weak equivalence of stalks $\sX_x \to \sY_x$ at every point $x$ of the site. 
The \emph{local injective model structure} on this category is the one in which the morphism of simplicial sheaves is a cofibration (resp. a weak equivalence) if and only if it is a monomorphism (resp. a local weak equivalence). The corresponding homotopy category is called the \emph{simplicial homotopy category} and is denoted by $\sH_s(k)$. The left Bousfield localization of the local injective model structure with respect to the collection of all projection morphisms $\sX \times \A^1 \to \sX$, as $\sX$ runs over all simplicial sheaves, is called the \emph{$\A^1$-model structure}. The corresponding homotopy category is called the \emph{$\A^1$-homotopy category} and is denoted by $\sH(k)$.  

\begin{definition}
\label{definition A1-local}
A simplicial sheaf $\sX$ on $Sm/k$ is said to be \emph{$\A^1$-local} if for any $U \in Sm/k$, the projection map $U \times \A^1 \to U$ induces a bijection 
\[
\Hom_{\sH_s(k)}(U, \sX) \to \Hom_{\sH_s(k)}(U \times \A^1, \sX).
\] 
Following standard conventions, an $\A^1$-local scheme will be called \emph{$\A^1$-rigid}.  A scheme $X \in Sm/k$ is $\A^1$-rigid if for every $U \in Sm/k$, any morphism $h: U \times \A^1 \to X$ factors through the projection map $U \times \A^1 \to U$.
\end{definition}

\begin{examples}
Curves of genus $\geq 1$, abelian varieties and algebraic tori are some examples of $\A^1$-rigid schemes.
\end{examples}

We now recall the singular construction $\Sing$ in $\A^1$-homotopy theory defined by Morel-Voevodsky (see \cite[p.87]{Morel-Voevodsky}).  For a simplicial sheaf $\sX$ on $Sm/k$, define $\Sing \sX$ to be the simplicial sheaf given by
\[
(\Sing \sX)_n = \underline{\Hom}(\Delta_n,\sX_n), 
\]
\noindent where $\Delta_{\bullet}$ denotes the simplicial sheaf  
\[
\Delta_n  = \Spec\left(\frac{k[x_0,...,x_n]}{(\sum_ix_i=1)}\right)
\]
\noindent with natural face and degeneracy maps analogous to the ones on topological simplices. The functor $\Sing$ commutes with limits; in particular, with products. Also, there exists a natural transformation $Id \to \Sing$ such that for any simplicial sheaf $\sX$, the morphism $\sX \to \Sing(\sX)$ is an $\A^1$-weak equivalence.

There exists an \emph{$\A^1$-localization} endofunctor (\cite[\textsection 2, Theorem 1.66 and p.107]{Morel-Voevodsky}) on the simplicial homotopy category $\sH_s(k)$, denoted by $L_{\A^1}$, such that for every simplicial sheaf $\sX$, the simplicial sheaf $L_{\A^1}(\sX)$ is $\A^1$-local.  
  In \cite[\textsection 2, Theorem 1.66 and p. 107]{Morel-Voevodsky}, an explicit description of $L_{\A^1}$ is given as follows:
\[
L_{\A^1} = Ex \circ (Ex \circ \Sing)^{\N} \circ Ex,
\]
\noindent where $Ex$ denotes a simplicial fibrant replacement functor on $\sH_s(k)$. There exists a natural transformation $Id \to L_{\A^1}$ which factors through the natural transformation $Id \to \Sing$ mentioned above. For any object $\sX$, the morphism $\sX \to L_{\A^1}(\sX)$ is an $\A^1$-weak equivalence. 

\begin{notation}
Given a simplicial sheaf of sets $\sX$ on $Sm/k$, we will denote by $\pi_0(\sX)$ the presheaf on $Sm/k$ that associates with $U \in Sm/k$ the coequalizer of the diagram $\sX_1(U) \rightrightarrows \sX_0(U)$, where the maps are the face maps coming from the simplicial data of $\sX$.  We will denote by $\pi_0^s(\sX)$ the Nisnevich sheafification of the presheaf $\pi_0(\sX)$.

Now, let $n \geq 1$ be an integer and let $(\sX, x)$ be a pointed simplicial sheaf of sets on $Sm/k$.  For $U \in Sm/k$, we will denote by $U_+$ the scheme $U \coprod \Spec k$, pointed at the added basepoint $\Spec k$.  We will denote by $\pi_n^s(\sX,x)$ the Nisnevich sheafification of the presheaf (of groups) on $Sm/k$ that associates with $U \in Sm/k$ the group $\Hom_{\sH_s(k)}(\Sigma_n^s U_{+}, (\sX,x))$ of simplicial homotopy classes of pointed maps from the simplicial $n$-fold suspension of the pointed scheme $U_{+}$ into $(\sX, x)$.

We caution the reader that this notation is not to be confused with the similar notation used for the sheaves of \emph{stable homotopy groups}.  Although this choice of notation is unfortunate, we use it here nevertheless in order to be consistent with the notation in \cite{Balwe-Hogadi-Sawant} and \cite{Balwe-Sawant}.
\end{notation}

\begin{definition}
\label{definition-S}
Let $\sX$ be a simplicial sheaf on $Sm/k$.  The sheaf of \emph{$\A^1$-chain connected components} of $\sX$ is defined by
\[
\sS(\sX) := \pi_0^s(\Sing \sX).
\]  
\end{definition}

\begin{remark}
Let $X$ be a scheme over $k$.  For any smooth scheme $U$ over $k$, we say that two morphisms $f,g: U \to X$ are \emph{$\A^1$-homotopic} if there exists a morphism $h: U \times \A^1 \to X$ such that $h|_{U \times \{0\}} = f$ and $h|_{U \times \{1\}} = g$.  We say that $f,g: U \to X$ are \emph{$\A^1$-chain homotopic} if there exists a finite sequence $f_0=f, \ldots, f_n=g$ such that $f_i$ is $\A^1$-homotopic to $f_{i+1}$, for all $i$.  Clearly, $\A^1$-chain homotopy is an equivalence relation. It is easy to see that $\sS(X)$ is the sheafification in the Nisnevich topology of the presheaf on $Sm/k$ that associates with every smooth scheme $U$ over $k$ the set of equivalence classes in $X(U)$ under the relation of $\A^1$-chain homotopy.  
\end{remark}

\begin{definition}
\label{definition pi0A1}
Let $\sX$ be a simplicial sheaf on $Sm/k$.  The sheaf of \emph{$\A^1$-connected components} of $\sX$ is defined by 
\[
\pi_0^{\A^1}(\sX) := \pi_0^s(L_{\A^1}(\sX)).
\]
For any smooth scheme $U$ over $k$, we will say that $f,g \in \sX(U)$ are \emph{$\A^1$-equivalent} if they map to the same element of $\pi_0^{\A^1}(\sX)(U)$.  We say that $\sX$ is \emph{$\A^1$-connected} if $\pi_0^{\A^1}(\sX) \simeq \ast$, the trivial point sheaf.
\end{definition}

There is a canonical epimorphism $\sS(\sX) \to \pi_0^{\A^1}(\sX)$ \cite[\textsection 2, Corollary 3.22, p. 94]{Morel-Voevodsky}. This epimorphism is an isomorphism if $\Sing \sX$ is $\A^1$-local.

\begin{definition}
\label{definition A1-homotopy sheaves}
Let $(\sX, x)$ be a pointed simplicial sheaf on $Sm/k$ (that is, $x$ is a morphism $\Spec k \to \sX$).  For every integer $n \geq 1$, the $n$th \emph{$\A^1$-homotopy sheaf} of $\sX$ with basepoint $x$ is defined by 
\[
\pi_n^{\A^1}(\sX, x) := \pi_n^s(L_{\A^1}\sX, x),
\]
where $L_{\A^1}(\sX)$ is pointed by $\Spec k \xrightarrow{x} \sX \to L_{\A^1}(\sX)$, which we continue to denote by $x$.  We will always suppress base-points for the sake of brevity, when the base-point is understood from notation.
\end{definition}

The main difficulty in the study of $\pi_0^{\A^1}$ of schemes is that the explicit description of the $\A^1$-localization functor is cumbersome to handle.  However, in the cases when $\pi_0^{\A^1}$ of a scheme is \emph{$\A^1$-invariant}, it can be studied with geometric methods using results of \cite{Balwe-Hogadi-Sawant}.

The notion of \emph{Weil restriction} of a simplicial sheaf will be very useful in what follows.  We briefly recall it here.  Let $F/k$ be a finite field extension and let $f: \Spec F \to \Spec k$ denote the morphism corresponding to the inclusion $k \hookrightarrow F$. The pushforward functor $f_*$ from the category of simplicial sheaves on $Sm/F$ into the category of simplicial sheaves on $Sm/k$ is defined by 
\[
f_*(\sX)(U) = \sX(U \times_{\Spec k} \Spec F).
\] 
If $Ex$ denotes a simplicial fibrant replacement functor on the category of simplicial sheaves over $Sm/k$, one can show that the functor $f_* \circ Ex$ preserves simplicial weak equivalences. Thus, it induces a functor $\mathbf{R}f_*: \sH_s(F) \to \sH_s(k)$, which is the \emph{right derived functor} of $f_*$.  We recall that the functor $\mathbf{R}f_*$ preserves $\A^1$-local objects and thus induces the composition $\mathbf{R}f_* \circ L_{\A^1}$ induces a functor $\mathbf{R}^{\A^1}f_*: \sH(F) \to \sH(k)$ (see \cite[pages 92 and 108]{Morel-Voevodsky}). It follows from \cite[page 109, Proposition 2.12]{Morel-Voevodsky} that for any simplicial sheaf $\sX$ on $Sm/F$, the canonical morphism 
\[
\mathbf{R}f_*(\sX) \to \mathbf{R}^{\A^1}f_*(\sX) = \mathbf{R}f_* \circ L_{\A^1}(\sX)
\]
is an $\A^1$-weak equivalence. This induces an isomorphism 
\[
L_{\A^1} \circ \mathbf{R}f_* (\sX) \to \mathbf{R}f_* \circ L_{\A^1}(\sX).  
\]

\begin{notation}
Let $F$ be a finite field extension of $k$.  For the finite map $f: \Spec F \to \Spec k$, we will denote $\mathbf{R}f_*$ by $R_{F/k}$. 
\end{notation}

The following is a straightforward consequence of the above discussion.

\begin{lemma}
\label{lemma weil restriction}
Let $F/k$ be a finite extension of fields.  For every simplicial sheaf $\sX$ over $Sm/F$, we have $R_{F/k} \pi_0^{\A^1}(\sX) = \pi_0^{\A^1}(R_{F/k}\sX)$.
\end{lemma}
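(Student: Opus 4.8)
The plan is to unwind both sides of the asserted equality using the definitions assembled in this section, and to use the compatibility of $L_{\A^1}$ with $\mathbf{R}f_* = R_{F/k}$ together with the compatibility of Nisnevich sheafification of $\pi_0$ with the pushforward. By Definition \ref{definition pi0A1} we have $\pi_0^{\A^1}(\sX) = \pi_0^s(L_{\A^1}\sX)$ as a sheaf on $Sm/F$, so $R_{F/k}\pi_0^{\A^1}(\sX) = R_{F/k}\,\pi_0^s(L_{\A^1}\sX)$; on the other hand $\pi_0^{\A^1}(R_{F/k}\sX) = \pi_0^s\bigl(L_{\A^1}(R_{F/k}\sX)\bigr)$. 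The discussion immediately preceding the Notation defining $R_{F/k}$ gives a canonical isomorphism $L_{\A^1}\circ \mathbf{R}f_*(\sX)\xrightarrow{\ \sim\ }\mathbf{R}f_*\circ L_{\A^1}(\sX)$ in $\sH_s(k)$, i.e. $L_{\A^1}(R_{F/k}\sX)\simeq R_{F/k}(L_{\A^1}\sX)$. So it suffices to prove the general statement that for any simplicial sheaf $\sY$ on $Sm/F$ one has $R_{F/k}\,\pi_0^s(\sY) = \pi_0^s(R_{F/k}\sY)$, and then apply it with $\sY = L_{\A^1}\sX$.

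First I would reduce this auxiliary statement at the level of presheaves. Since $R_{F/k}$ is the right derived functor of $f_*$ and $f_*$ is defined on the sheaf level simply by precomposition with the base-change functor $U \mapsto U\times_{\Spec k}\Spec F$ (which is exact and has no higher derived functors on $0$-truncated objects — $f$ is finite, hence the pushforward of a discrete sheaf is again discrete and no fibrant replacement is needed), the functor $R_{F/k}$ applied to a discrete sheaf, such as $\pi_0^s(\sY)$, agrees with $f_*$ of that sheaf. Next, note that $\pi_0$ as a \emph{presheaf} operation commutes with $f_*$ on the nose: for $U\in Sm/k$, $(f_*\sY)_1(U)\rightrightarrows (f_*\sY)_0(U)$ is literally the diagram $\sY_1(U_F)\rightrightarrows \sY_0(U_F)$ with $U_F = U\times_{\Spec k}\Spec F$, whose coequalizer is $\pi_0(\sY)(U_F) = (f_*\pi_0(\sY))(U)$. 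Thus $\pi_0(f_*\sY) = f_*\pi_0(\sY)$ as presheaves on $Sm/k$.

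It then remains to commute $f_*$ with Nisnevich sheafification, i.e. to check $\pi_0^s(f_*\sY) := a_{\mathrm{Nis}}\,\pi_0(f_*\sY) = a_{\mathrm{Nis}}\,f_*\,\pi_0(\sY) \cong f_*\,a_{\mathrm{Nis}}\,\pi_0(\sY) =: f_*\,\pi_0^s(\sY)$. This is where I expect the only real content to lie: one must use that base change along the finite morphism $f$ sends Nisnevich covers to Nisnevich covers and, conversely, that $f_*$ preserves Nisnevich-local objects — equivalently that $f_*$ is the pushforward of a morphism of sites which is continuous in a way that makes $f_*$ exact on sheaves and commute with sheafification. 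Concretely, $f_*$ has a left adjoint $f^*$ which preserves covers, so $f_*$ preserves sheaves; and since $f$ is finite the functor $f^*$ is also exact, so $f_*$ preserves local epimorphisms and local isomorphisms, hence commutes with $a_{\mathrm{Nis}}$. Granting this, we get $\pi_0^s(R_{F/k}\sY) = R_{F/k}\,\pi_0^s(\sY)$, and specializing $\sY = L_{\A^1}\sX$ and using $L_{\A^1}(R_{F/k}\sX)\simeq R_{F/k}(L_{\A^1}\sX)$ yields the lemma. The main obstacle is purely this site-theoretic bookkeeping around $f_*$, sheafification and the derived functor identification $R_{F/k}|_{\text{discrete}} = f_*$; everything else is a direct unwinding of definitions, which is why the lemma is described as ``a straightforward consequence of the above discussion.''
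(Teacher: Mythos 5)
Your overall route is exactly the intended one (the paper gives no written proof, since the lemma is meant to follow from the preceding discussion): first use the canonical isomorphism $L_{\A^1}\circ \mathbf{R}f_*(\sX)\to \mathbf{R}f_*\circ L_{\A^1}(\sX)$, then commute $\pi_0^s$ with the pushforward along the finite map $f$, and your identification of the site-theoretic step as the only real content is accurate.

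Two points in the second half should be tightened. First, you state the auxiliary claim for $R_{F/k}=\mathbf{R}f_*$ but prove the presheaf-level commutation $\pi_0(f_*\sY)=f_*\pi_0(\sY)$ only for the \emph{underived} $f_*$; since $f_*$ need not preserve local weak equivalences (this is precisely why it is derived), the slide from $f_*$ to $\mathbf{R}f_*$ needs a word. The easiest patch is to apply your underived commutation to the fibrant object itself: $\mathbf{R}f_*(L_{\A^1}\sX)=f_*\bigl(Ex\,L_{\A^1}\sX\bigr)$, and since $L_{\A^1}\sX\to Ex\,L_{\A^1}\sX$ is a local weak equivalence over $Sm/F$ it induces an isomorphism on $\pi_0^s$, so $\pi_0^s\bigl(\mathbf{R}f_*L_{\A^1}\sX\bigr)\cong f_*\pi_0^s\bigl(Ex\,L_{\A^1}\sX\bigr)\cong f_*\pi_0^s(L_{\A^1}\sX)$ (alternatively, note that $L_{\A^1}\sX$ is already simplicially fibrant by the explicit formula $L_{\A^1}=Ex\circ(Ex\circ\Sing)^{\N}\circ Ex$). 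Second, your justification that $f_*$ commutes with Nisnevich sheafification is garbled: the left adjoint of precomposition along $U\mapsto U\times_{\Spec k}\Spec F$ is a left Kan extension, and ``$f^*$ preserves covers and is exact, hence $f_*$ preserves local isomorphisms'' is not a valid inference as stated. The clean argument is on stalks: if $S$ is (the henselization of) a local ring of a smooth $k$-scheme, then $S\times_{\Spec k}\Spec F$ is a finite disjoint union of henselian local schemes over $F$, so Nisnevich points of $Sm/k$ pull back to finite families of Nisnevich points of $Sm/F$, whence $f_*$ preserves stalks, is exact for the Nisnevich topology, and commutes with $a_{\mathrm{Nis}}$; this also confirms your claim that $\mathbf{R}f_*$ of a sheaf of sets such as $\pi_0^{\A^1}(\sX)$ is just the ordinary Weil restriction. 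With these repairs your argument is complete and coincides with the one the paper has in mind.
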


\section{Algebraic groups and their \texorpdfstring{$\A^1$}{A1}-connected components}
\label{section preliminaries algebraic groups}

In this section, we briefly recall the basic definitions and properties from algebraic group theory; for details, refer to \cite{SGA3.2}, \cite{SGA3.3} and \cite[Appendix A]{Conrad-Gabber-Prasad}.

We will always work over a field $k$.  We will write $GL_n$ for the general linear group scheme and write $\G_m$ for $GL_1$.  We recall the definitions of reductive and semisimple group schemes from \cite[Expos\'e XIX, 1.6, 2.7]{SGA3.3}.  A \emph{reductive} algebraic group over $k$ is a smooth, affine $k$-group scheme with trivial unipotent radical.   A \emph{semisimple} algebraic group over $k$ is a smooth, affine $k$-group scheme with trivial radical.  Over a field, reductive algebraic groups are \emph{linear}, that is, they admit a finitely presented, closed immersion into $GL_n$ for some $n$, which is a group homomorphism.  All the reductive algebraic groups considered in what follows will be assumed to be connected.

The \emph{derived group} of $G$ \cite[Expos\'e XXII, Theorem 6.2.1(iv)]{SGA3.3}, will be denoted by $G_{\rm der}$. It is a normal, semisimple subgroup scheme of $G$ and the quotient 
\[
{\rm corad}(G) :=  G/G_{\rm der}
\]
is a $k$-torus \cite[Expos\'e XXII, 6.2]{SGA3.3} called the \emph{coradical} of $G$.

The center of a reductive group is of multiplicative type \cite[Expos\'e XII, Proposition 4.11]{SGA3.2}.  There exists a central isogeny \cite[Expos\'e XXII, Proposition 6.2.4]{SGA3.3}
\[
G_{\rm der} \times T \to G,
\]
\noindent where $T$ is a torus, the radical of $G$.  This is a faithfully flat, finitely presented morphism, whose kernel is a finite group of multiplicative type contained in the center of $G_{\rm der} \times T$.

An algebraic group is said to be \emph{almost $k$-simple} if it is smooth, connected over $k$ and admits no infinite normal $k$-subgroup \cite[p.41]{Tits-classification}. An algebraic group $G$ over $k$ is said to be \emph{absolutely almost simple} if $G_{\-k}$ is almost $\-k$-simple.  An algebraic group $G$ is said to be the \emph{almost direct product} of its algebraic subgroups $G_1, \ldots, G_n$ if the product map 
\[
G_1 \times \cdots \times G_n \to G
\]
is an isogeny.  Semisimple algebraic $k$-groups are exactly those that occur as the almost direct product of their almost $k$-simple algebraic subgroups, called the \emph{almost $k$-simple factors}.

A connected semisimple algebraic group $G$ over $k$ is said to be \emph{simply connected} if every central isogeny $G' \to G$ is an isomorphism.  Given a connected semisimple algebraic group $G$, there exists a simply connected group $G_{\rm sc}$ and a central isogeny $\pi: G_{\rm sc} \to G$.  The pair $(G_{\rm sc}, \pi)$ is unique up to unique isomorphism and its formation respects base change by field extensions.  $G_{\rm sc}$ is called the \emph{simply connected central cover} of $G$.  Every semisimple simply connected $k$-group is uniquely given by a direct product of almost $k$-simple simply connected groups. If $G$ is almost $k$-simple and simply connected, there exists a finite field extension $F/k$  and an absolutely almost simple, simply connected $F$-group $H$ such that $G = R_{F/k}(H)$ \cite[p. 41]{Tits-classification}.

\begin{definition}
\label{definition isotropic anisotropic}
A reductive algebraic group $G$ over a field $k$ is called \emph{isotropic} if $G$ contains a non-central $k$-subgroup scheme isomorphic to $\G_m$.  A reductive algebraic group $G$ over a field $k$ is called \emph{anisotropic} if it contains no subgroup isomorphic to $\G_m$.
\end{definition}

We now describe $\A^1$-connected components of algebraic groups over a field $k$.  By \cite[Theorem 4.18]{Choudhury}, for any algebraic group $G$, the sheaf $\pi_0^{\A^1}(G)$ is $\A^1$-invariant.  Putting this together with \cite[Theorem 1]{Balwe-Hogadi-Sawant}, we obtain the following description.

\begin{proposition}
For an algebraic group $G$ over a field $k$, we have 
\[
\pi_0^{\A^1}(G) \simeq \underset{n}{\varinjlim}~\sS^n(G).
\]
\end{proposition}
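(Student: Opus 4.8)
The plan is to derive the statement from the two facts recalled in the paragraph immediately before it, so that the proof is almost entirely formal; the substance is imported from \cite{Choudhury} and \cite{Balwe-Hogadi-Sawant}. Concretely, I would: (i) set up the natural tower whose colimit is $\varinjlim_n \sS^n(G)$, together with its canonical comparison map to $\pi_0^{\A^1}(G)$; (ii) recall that this comparison map is an isomorphism as soon as $\pi_0^{\A^1}$ of the sheaf in question is $\A^1$-invariant; (iii) invoke Choudhury's theorem to verify this $\A^1$-invariance for $G$.

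For step (i): the natural transformation $\mathrm{Id} \to \Sing$, followed by the quotient map $\Sing \sX \to \pi_0^s(\Sing \sX) = \sS(\sX)$, yields for every Nisnevich sheaf $\sX$ a natural epimorphism $\sX \twoheadrightarrow \sS(\sX)$; iterating it gives a directed system
\[
\sX \twoheadrightarrow \sS(\sX) \twoheadrightarrow \sS^2(\sX) \twoheadrightarrow \cdots \twoheadrightarrow \sS^n(\sX) \twoheadrightarrow \cdots
\]
of quotient sheaves of $\sX$, with colimit $\varinjlim_n \sS^n(\sX)$. The canonical epimorphism $\sS(\sX) \twoheadrightarrow \pi_0^{\A^1}(\sX)$ recalled immediately after Definition~\ref{definition pi0A1} is natural in $\sX$; applying it all along the tower and passing to the colimit produces a canonical epimorphism $\varinjlim_n \sS^n(\sX) \twoheadrightarrow \pi_0^{\A^1}(\sX)$ for every $\sX$. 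This is the standard construction of \cite{Balwe-Hogadi-Sawant}, resting on the fact that $\A^1$-homotopic sections always have the same image in $\pi_0^{\A^1}$.

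For steps (ii) and (iii): by \cite[Theorem 1]{Balwe-Hogadi-Sawant}, whenever $\pi_0^{\A^1}(\sX)$ is $\A^1$-invariant, the canonical map $\varinjlim_n \sS^n(\sX) \to \pi_0^{\A^1}(\sX)$ is an isomorphism --- informally, $\A^1$-invariance of the target forces the ascending chain of iterated $\A^1$-chain-homotopy relations on sections of $\sX$ to stabilize already after $\omega$ steps, so that no transfinite iteration is needed to reach $\pi_0^{\A^1}$. By \cite[Theorem 4.18]{Choudhury}, the sheaf $\pi_0^{\A^1}(G)$ is $\A^1$-invariant; thus the hypothesis of \cite[Theorem 1]{Balwe-Hogadi-Sawant} is satisfied with $\sX = G$, and we conclude $\pi_0^{\A^1}(G) \simeq \varinjlim_n \sS^n(G)$. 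There is no serious obstacle internal to this argument: the entire weight of the proof lies in the two imported results --- Choudhury's theorem, whose proof exploits the group structure on $G$, and \cite[Theorem 1]{Balwe-Hogadi-Sawant}, whose proof requires a careful analysis of iterated (``ghost'') $\A^1$-homotopies and the $\A^1$-localization functor $L_{\A^1}$.
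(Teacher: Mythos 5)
Your proposal is correct and follows exactly the paper's argument: the paper's proof consists precisely of citing \cite[Theorem 4.18]{Choudhury} for the $\A^1$-invariance of $\pi_0^{\A^1}(G)$ and \cite[Theorem 1]{Balwe-Hogadi-Sawant} for the identification of $\pi_0^{\A^1}$ with $\varinjlim_n \sS^n$ under that hypothesis. The additional material you include on the tower $\sX \twoheadrightarrow \sS(\sX) \twoheadrightarrow \sS^2(\sX) \twoheadrightarrow \cdots$ and the comparison map is just a recollection of the construction from \cite{Balwe-Hogadi-Sawant} and does not change the route.
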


We end this section with an explicit description of $\A^1$-connected components of semisimple, simply connected groups in terms of other classical invariants of algebraic groups.  This description is a straightforward consequence of the results of \cite{Balwe-Sawant}.  We first recall the definitions of \emph{$R$-equivalence} and \emph{Whitehead groups}.

\begin{definition}
\label{definition R-equivalence}
Let $G$ be an algebraic group over a field $k$.  Two $k$-rational points $x, y$ of $G$ are said to be \emph{$R$-equivalent} if there is a rational map $f: \P^1_k \dashrightarrow G$ defined at $0$ and $1$ such that $f(0)=x$ and $f(1)=y$.
\end{definition}

The notion of $R$-equivalence was first studied in the context of algebraic groups in \cite{Colliot-Thelene-Sansuc-1979}.  For the basic properties regarding $R$-equivalence, also see \cite[Section II]{Gille-IHES}, \cite{Gille} and \cite[Chapter 6]{Voskresenskii}.  It can be shown that $R$-equivalence gives an equivalence relation on $G(k)$.  It is easy to see that elements of $G(k)$ that are $R$-equivalent to the identity form a normal subgroup of $G(k)$.  The quotient of $G(k)$ by this normal subgroup is denoted by $G(k)/R$ and called the \emph{group of $R$-equivalence classes} of $G$ over $k$.  

\begin{definition}
\label{definition R-trivial}
We say that an algebraic group $G$ over a field $k$ is \emph{$R$-trivial} if the group $G(F)/R := G_F(F)/R$ is trivial, for every field extension $F/k$.  
\end{definition}

\begin{definition}
\label{definition Whitehead group}
For an algebraic group $G$ over a field $k$ and a field extension $F$ of $k$, let $G(F)^+$ be the normal subgroup of $G(F)$ generated by the subsets $U(F)$ where $U$ varies over all $F$-subgroups of $G$ which are isomorphic to the additive group $\G_a$.  The group
\[
W(F,G):= G(F)/G(F)^+
\]
\noindent is called the \emph{Whitehead group} of $G$ over $F$.
\end{definition}

Evidently, there is a canonical surjection 
\begin{equation}
\label{equation surjections1}
W(k,G) \to G(k)/R,
\end{equation}
for any algebraic group $G$ over a field $k$.  This surjection is an isomorphism if $G$ is semisimple, simply connected, absolutely almost simple and isotropic (see \cite[Th\'eor\`eme 7.2]{Gille}, for example).  The above surjection is not an isomorphism in general for non-simply connected groups.

\begin{theorem}
\label{theorem IMRN}
Let $G$ be a semisimple, simply connected group over an infinite perfect field $k$.  Let $F$ be a perfect field extension of $k$.  Then there is a canonical isomorphism 
\[
\pi_0^{\A^1}(G)(F) \to G(F)/R. 
\]
\end{theorem}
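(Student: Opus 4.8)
The plan is to deduce Theorem~\ref{theorem IMRN} from the results of \cite{Balwe-Sawant} by reducing, via the structure theory of semisimple simply connected groups recalled above, to the absolutely almost simple case, where the identification of $\pi_0^{\A^1}$ with $R$-equivalence classes is essentially the content of \emph{loc. cit.} First I would recall that any semisimple simply connected $k$-group $G$ splits uniquely as a finite direct product $G = \prod_{i} G_i$ of almost $k$-simple simply connected groups, and that each $G_i$ is of the form $R_{F_i/k}(H_i)$ for a finite separable (here, since $k$ is perfect, automatically separable) extension $F_i/k$ and an absolutely almost simple, simply connected $F_i$-group $H_i$. Since $\pi_0^{\A^1}$ commutes with finite products (as $\Sing$ and $L_{\A^1}$ do, and $\pi_0^s$ of a product is the product of the $\pi_0^s$'s for the relevant sheaves) and since $R$-equivalence is also compatible with finite products of groups, it suffices to treat a single factor $G_i$; and then by Lemma~\ref{lemma weil restriction} together with the standard compatibility of $R$-equivalence with Weil restriction (\ie $R_{F/k}(H)(E)/R \cong H(E \otimes_k F)/R$, using that $E\otimes_k F$ is a product of perfect fields when $E,F$ are perfect), we may replace $k$ by $F_i$ and $G$ by the absolutely almost simple group $H_i$.

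Next, in the absolutely almost simple simply connected case, I would invoke the main computation of \cite{Balwe-Sawant}: there it is shown that for a semisimple simply connected group $G$, the sheaf $\pi_0^{\A^1}(G)$ evaluated on a perfect field extension $F/k$ agrees with $G(F)/R$. Concretely, the argument in \cite{Balwe-Sawant} proceeds through the identification of $\pi_0^{\A^1}(G)$ with $\varinjlim_n \sS^n(G)$ (available because $\pi_0^{\A^1}(G)$ is $\A^1$-invariant by \cite{Choudhury} and \cite{Balwe-Hogadi-Sawant}), and then an analysis of $\A^1$-chain homotopy classes of $F$-points; for an isotropic absolutely almost simple simply connected group one has the chain of identifications $\sS(G)(F) \to W(F,G) \to G(F)/R$ with the second map an isomorphism by \cite{Gille}, while the anisotropic case is handled separately (the group is then $\A^1$-rigid-like on fields and both sides are already computed). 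Since over a perfect field an absolutely almost simple group is either isotropic or anisotropic and the statement is insensitive to which, one concludes the claimed canonical isomorphism $\pi_0^{\A^1}(G)(F) \to G(F)/R$.

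The one point requiring care, and the main obstacle, is the bookkeeping around Weil restriction and perfectness: one must check that the canonical map $\pi_0^{\A^1}(G)(F)\to G(F)/R$ is genuinely \emph{canonical} (functorial in $F$ and independent of the chosen decomposition into almost simple factors), and that all the reductions stay within the category of perfect fields, since $F\otimes_k F_i$ need not be a field but is at least a finite product of finite extensions of $F$, each perfect because $F$ is. One also needs that the isomorphism $L_{\A^1}\circ R_{F/k} \cong R_{F/k}\circ L_{\A^1}$ recalled above passes to $\pi_0^s$, which is exactly Lemma~\ref{lemma weil restriction}, and that the canonical surjection $\sS(G)\to\pi_0^{\A^1}(G)$ together with $\A^1$-invariance identifies $\pi_0^{\A^1}(G)(F)$ with $\varinjlim_n \sS^n(G)(F)$ at the level of $F$-points. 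Granting these compatibilities, the theorem follows formally from \cite{Balwe-Sawant}, and indeed the statement as phrased is meant to be recorded here as an immediate consequence of those results rather than reproved from scratch.
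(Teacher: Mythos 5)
Your proposal is correct and follows essentially the same route as the paper: decompose $G$ into almost $k$-simple factors, write each as a Weil restriction $R_{k_i/k}(G_i)$ of an absolutely almost simple group, use Lemma~\ref{lemma weil restriction} together with the fact that $F\otimes_k k_i$ is a product of (perfect) fields, and invoke the results of \cite{Balwe-Sawant} (with \cite{Asok-Hoyois-Wendt-2} in the isotropic case and the anisotropic case of \emph{loc.\ cit.} otherwise) for the absolutely almost simple building blocks. The compatibility points you flag (perfectness of the factors of $F\otimes_k k_i$, Weil restriction versus $R$-equivalence) are exactly the ones the paper's proof handles.
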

\begin{proof}
First assume that $G$ is a semisimple, simply connected and absolutely almost simple group over $k$.  If $G$ is isotropic, then by \cite[Theorem 4.3.1]{Asok-Hoyois-Wendt-2}, $\Sing G$ is $\A^1$-local and it follows that $\pi_0^{\A^1}(G)(F) \simeq G(F)/R$, for every field extension $F/k$ (see \cite[Theorem 3.4]{Balwe-Sawant}).  If $G$ is anisotropic, this is \cite[Theorem 4.2]{Balwe-Sawant} (although it is stated there with the assumption that the base field is of characteristic $0$, it is easy to see that the proof works over any infinite perfect field).

Now, let $G$ be an arbitrary semisimple, simply connected group.  There exist almost $k$-simple algebraic groups $H_1, \ldots, H_r$ such that $G \simeq H_1 \times \cdots \times H_r$.  For each $i$, there exists a finite field extension $k_i/k$ and an absolutely almost simple group $G_i$ such that $R_{k_i/k}(G_i) \simeq H_i$.  Note that for any finitely generated field extension $F/k$, we have
\[
\pi_0^{\A^1}(H_i)(F) \simeq \pi_0^{\A^1}(R_{k_i/k}(G_i))(F) \simeq R_{k_i/k}(\pi_0^{\A^1}(G_i))(F) \simeq \pi_0^{\A^1}(G_i)(F \otimes_k k_i) 
\]
However, since $F \otimes_k k_i$ is a product of fields, by the special case of absolutely almost simple groups explained above, we have
\[
\pi_0^{\A^1}(G_i)(F \otimes_k k_i) \simeq G_i(F \otimes_k k_i)/R
\]
Since
\[
G_i(F \otimes_k k_i)/R \simeq R_{k_i/k}(G_i)(F)/R \simeq H_i(F)/R,
\]
for every $i$, we conclude that
\[
\pi_0^{\A^1}(G)(F) \simeq \prod_{i=1}^{r} ~ \pi_0^{\A^1}(H_i)(F) {\simeq} \prod_{i=1}^{r} ~ H_i(F)/R \simeq G(F)/R.
\]
\end{proof}

This immediately implies the failure of $\A^1$-locality of the singular construction on $G$ satisfying the hypotheses of Theorem \ref{theorem IMRN} and having at least one anisotropic factor.

\begin{corollary}
\label{corollary IMRN}
Let $G$ be a semisimple, simply connected group over an infinite perfect field $k$.  If $G$ has an anisotropic almost $k$-simple factor, then $\Sing G$ is not $\A^1$-local.
\end{corollary}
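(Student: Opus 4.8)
The plan is to derive this from Theorem \ref{theorem IMRN} combined with the known necessary conditions for $\A^1$-locality. First I would recall that if $\Sing G$ is $\A^1$-local, then the canonical epimorphism $\sS(G) \to \pi_0^{\A^1}(G)$ is an isomorphism and, more importantly, $\pi_0^{\A^1}(G)$ is then computed by a single application of $\sS$ rather than an infinite iteration; concretely, $\A^1$-locality of $\Sing G$ forces $\pi_0^{\A^1}(G)(U)$ to coincide with the set of naive $\A^1$-chain homotopy classes of $U$-points of $G$ for every smooth $U$, in particular for $U = \Spec F$ with $F/k$ a (perfect) field extension. On the other hand, Theorem \ref{theorem IMRN} identifies $\pi_0^{\A^1}(G)(F)$ with $G(F)/R$. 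So the strategy is to exhibit a field extension $F/k$ and two $R$-equivalent $F$-points of $G$ that are \emph{not} $\A^1$-chain homotopic, contradicting $\A^1$-locality; equivalently, to show that on the anisotropic factor the set of $\A^1$-chain homotopy classes over some field is strictly larger than $G(F)/R$.

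The cleanest way to package this is to reduce to the anisotropic almost $k$-simple factor itself. Write $G \simeq H_1 \times \cdots \times H_r$ with the $H_i$ almost $k$-simple and simply connected, and suppose $H_1$ is anisotropic. Since $\Sing$ commutes with products and $\pi_0^s$ of a product is the product of the $\pi_0^s$'s, $\A^1$-locality of $\Sing G$ would imply $\A^1$-locality of $\Sing H_1$ (a retract/factor argument: the projection $G \to H_1$ and inclusion $H_1 \hookrightarrow G$ exhibit $\Sing H_1$ as a retract of $\Sing G$ in $\sH_s(k)$, and a retract of an $\A^1$-local object is $\A^1$-local). So it suffices to treat the case where $G$ itself is anisotropic, almost $k$-simple and simply connected. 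Further, writing $G = R_{F_0/k}(G_0)$ for an absolutely almost simple anisotropic simply connected $F_0$-group $G_0$, Lemma \ref{lemma weil restriction} together with the fact that $R_{F_0/k}$ preserves $\A^1$-local objects lets me reduce to the absolutely almost simple anisotropic case, which is exactly the regime of \cite[Theorem 4.2]{Balwe-Sawant} invoked in the proof of Theorem \ref{theorem IMRN}.

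With that reduction in hand, the contradiction is obtained as follows. If $\Sing G$ were $\A^1$-local, then $\sS(G) \xrightarrow{\sim} \pi_0^{\A^1}(G)$, and in particular $\sS(G)$ would already be $\A^1$-invariant and its sections over fields would equal $G(F)/R$ by Theorem \ref{theorem IMRN}. But for an anisotropic group $G$ has no copy of $\G_a$, hence no nonconstant morphism $\A^1 \to G$ (the image of $\A^1$ would be a rational, hence non-proper-but-unirational, curve landing in an anisotropic group, which has no such rational curves — this is precisely the input making anisotropic groups $\A^1$-rigid at the level of naive homotopy). Therefore every naive $\A^1$-homotopy of $F$-points of $G$ is constant, so the presheaf $\pi_0(\Sing G)$ is already the identity functor $U \mapsto G(U)$ on field-valued points, giving $\sS(G)(F) = G(F)$ for every field $F$. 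Combining, $\A^1$-locality would force $G(F) = G(F)/R$ for all perfect $F/k$, i.e. $G$ would be $R$-trivial. I would then quote the standard fact that an anisotropic semisimple simply connected group need not be — and in the relevant examples is not — $R$-trivial (e.g. anisotropic forms of type $A_n$ coming from division algebras, via Platonov's examples of nontrivial $SK_1$), which contradicts $\A^1$-locality. The main obstacle — or rather the only subtle point — is the reduction step asserting that $\A^1$-locality of $\Sing G$ descends to each factor and through Weil restriction; everything else is a direct combination of Theorem \ref{theorem IMRN} with the elementary observation that anisotropic groups are naively $\A^1$-rigid and the existence of anisotropic groups with nontrivial $R$-equivalence.
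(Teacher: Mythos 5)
Your reduction to a single anisotropic almost $k$-simple factor and the two identifications $\sS(G)(F)\simeq G(F)$ and $\pi_0^{\A^1}(G)(F)\simeq G(F)/R$ are in line with the paper (the first is exactly \cite[Lemma 3.7]{Balwe-Sawant}, which the paper cites; note that your inline justification for it is off: an anisotropic group over a perfect field is unirational and contains many rational curves defined over $k$ --- what it lacks are nonconstant \emph{morphisms} from $\A^1$, which is the actual content of that lemma). The genuine gap is in your final step. From $\A^1$-locality you correctly deduce that the quotient map $G(F)\to G(F)/R$ would have to be a bijection; but this says that every $R$-equivalence class is a singleton, which is the \emph{opposite} extreme from $R$-triviality (Definition \ref{definition R-trivial} asks that $G(F)/R$ be the trivial group, i.e.\ that there be only one class). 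Hence quoting Platonov-type examples of anisotropic groups with nontrivial $G(F)/R$ contradicts nothing: nontriviality of $G(F)/R$ is entirely compatible with $G(F)\to G(F)/R$ being injective. Moreover, even with the logic repaired, such examples exist only for special groups, whereas the corollary must handle an arbitrary anisotropic almost $k$-simple factor.

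What is actually needed is the existence of two \emph{distinct} but $R$-equivalent points, so that $G(k)\to G(k)/R$ fails to be injective. The paper gets this uniformly: a reductive group over a perfect field is unirational \cite[Theorem 18.2]{Borel}, and since $k$ is infinite one finds a nonconstant rational curve through two distinct $k$-points, giving distinct $R$-equivalent elements of $G(k)$. Combined with $G(k)\simeq \sS(G)(k)$ and Theorem \ref{theorem IMRN}, the map $\sS(G)(k)\to \pi_0^{\A^1}(G)(k)\simeq G(k)/R$ is then not injective, so $\Sing G$ cannot be $\A^1$-local. If you replace your last step by this unirationality argument, your proof goes through; your additional Weil-restriction reduction to the absolutely almost simple case is then unnecessary (Theorem \ref{theorem IMRN} already covers almost $k$-simple groups), though harmless.
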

\begin{proof}
Since $G$ is a direct product of its almost $k$-simple factors, we are reduced to the case of an anisotropic, semisimple, almost $k$-simple group.  Since $G$ is reductive over a perfect field $k$, it is unirational over $k$ (see \cite[Theorem 18.2]{Borel}).  Therefore, there exists a pair of distinct $R$-equivalent elements in $G(k)$.  Since $G$ is anisotropic, we have $G(k) \simeq \sS(G)(k)$, by \cite[Lemma 3.7]{Balwe-Sawant}.  Thus, the map $\sS(G)(k) \to \pi_0^{\A^1}(G)(k)$ is not a bijection. This shows that $\Sing G$ cannot be $\A^1$-local.
\end{proof}

\section{Failure of \texorpdfstring{$\A^1$}{A1}-locality of the singular construction and consequences}
\label{section failure}

\subsection{\texorpdfstring{$\A^1$}{A1}-locality of \texorpdfstring{$\Sing$}{Sing*}}
\label{subsection failure A1-locality}

The following isotropy hypothesis on reductive algebraic $k$-groups was introduced in \cite{Raghunathan-1989}.  This is the isotropy hypothesis $(\ast)$ from the introduction.

\begin{hypothesis}
\label{hypothesis isotropy}
Each of the almost $k$-simple components of $G_{\rm der}$ contains a $k$-subgroup scheme isomorphic to $\G_m$. 
\end{hypothesis}

\begin{remark}
We caution the reader that reductive groups satisfying Hypothesis \ref{hypothesis isotropy} are called \emph{isotropic reductive groups} in \cite{Asok-Hoyois-Wendt-2}.  However, in this paper, we stick to the classical definitions and terminology \cite[V.20.1]{Borel}.
\end{remark}

In this section, we show that if a reductive algebraic group $G$ over an infinite perfect field does not satisfy Hypothesis \ref{hypothesis isotropy}, then the Morel-Voevodsky singular construction $\Sing(G)$ is not $\A^1$-local.  A key role in the proof will be played by the \emph{fppf} classifying space $B_{\rm fppf} G$ of a reductive group $G$.  We begin by briefly introducing this object.

\begin{definition}
\label{definition t-local replacement}
Let $t$ be a Grothendieck topology on a small category $\mathbf{C}$.  We say that a simplicial sheaf $\sF$ on $\mathbf{C}$ is \emph{$t$-local} if, for every $X \in \mathbf{C}$ and every $t$-covering sieve $\sU$ of $X$, the restriction map
\[
\sF(X) \to \underset{(Y\to X) \in \sU}{\rm holim}~ \sF(Y)
\]
is a weak equivalence.   
\end{definition}

The \emph{\v{C}ech $t$-local injective model structure} on this category is the left Bousfield localization of the injective model structure with respect to the set of maps $\{\sU \to X\}$, where $X$ runs over all objects of $\mathbf{C}$ and $\sU$ runs over all covering sieves of $X$. In this model structure, an object $\sF$ is fibrant if and only if it is $t$-local and also fibrant with respect to the injective model structure.

It can be proved (see the argument in \cite[Example A.10]{Dugger-Hollander-Isaksen}) that in the case of the category $Sm/k$, the \v{C}ech Nisnevich-local injective model structure is the same as local injective model structure described in Section \ref{section preliminaries A1-connectedness}. 

We now apply this notion to the category $Sch/k$ of schemes of finite type over $k$ with the \emph{fppf} topology. Thus, we have the model category of simplicial sheaves on $Sch/k$ with the \v{C}ech \emph{fppf}-local injective model structure. Let $\mathbf{R}_{\rm fppf}$ denote the fibrant replacement functor for this model structure. The inclusion functor $i: Sm/k \to Sch/k$ induces a restriction functor $i^*$ from the category of simplicial fppf-sheaves on $Sch/k$ to the category of simplicial Nisnevich sheaves on $Sm/k$. 

For a group sheaf $G$, we will denote by $BG$ the pointed simplicial sheaf whose $n$-simplices are $G^n$ with usual face and degeneracy maps.  

\begin{definition}
Let $G$ be an $fppf$-sheaf of groups on $Sch/k$. Then we define $B_{\rm fppf}G$ to be the simplicial Nisnevich sheaf defined by 
\[
B_{\rm fppf}G := i^* \circ \mathbf{R}_{\rm fppf}(BG). 
\]
\end{definition}

We denote by $(Sm/k)_{\rm fppf}$ the site of faithfully flat, finitely presented smooth schemes over $k$ which are separated and of finite type.  We will use simplicial and $\A^1$-fiber sequences of simplicial \emph{fppf}-sheaves of sets.  Following \cite[\textsection 2]{Asok-Hoyois-Wendt-2}, by a \emph{simplicial fiber sequence} of pointed simplicial presheaves, we mean a homotopy Cartesian square in which either the top-right or bottom-left corner is a point.  One defines an \emph{$\A^1$-fiber sequence} similarly with appropriate modifications, see \cite[\textsection 2.3, Definition 2.9]{Asok}.  As in topology, $\A^1$-fiber sequences of pointed simplicial sheaves induce long exact sequences of $\A^1$-homotopy sheaves.  In what follows, we will suppress the basepoints for the sake of brevity.

The following lemma will be very useful in the proof of our main theorem.  

\begin{lemma}
\label{lemma classifying spaces}
Let $G$ be an algebraic group of multiplicative type over a field $k$.  Then the classifying space $B_{\rm fppf} G$ is $\A^1$-local. 
\end{lemma}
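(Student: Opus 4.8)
The plan is to reduce the statement to the two basic building blocks of groups of multiplicative type — namely $\G_m$ and finite group schemes $\mu_n$ (or, more generally, $\Z/n$ after passing to a finite extension) — and to handle each using a known $\A^1$-fiber sequence together with the fact that $\pi_0^{\A^1}$ and the higher $\A^1$-homotopy sheaves of the total space control $\A^1$-locality of the base. First I would recall the standard criterion: a simplicial sheaf $\sF$ with $\pi_0^{\A^1}$-trivial (or more generally $\A^1$-invariant $\pi_0$) is $\A^1$-local precisely when it is simplicially fibrant and all its homotopy sheaves are strictly $\A^1$-invariant; for a classifying space $B_{\mathrm{fppf}}G$ this amounts to saying that $\pi_1 = G$ (and $\pi_n=0$ for $n\geq 2$ when $G$ is abelian) is strongly $\A^1$-invariant. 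So the real content is: \emph{an $\mathrm{fppf}$-sheaf of groups of multiplicative type is strongly $\A^1$-invariant as a Nisnevich sheaf on $Sm/k$}, together with the identification of $i^*\mathbf{R}_{\mathrm{fppf}}(BG)$ with the usual simplicial classifying space computing $H^1_{\mathrm{fppf}}(-,G)$.

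Next I would dévissage. A group of multiplicative type $G$ over $k$ sits, after a finite (étale, Galois) extension $F/k$ splitting it, in an extension of a $k$-torus by a finite multiplicative group; and any $k$-torus is isogenous to a product of Weil restrictions $R_{F_i/k}\G_m$, while any finite multiplicative group is, over a finite extension, a product of $\mu_{n_i}$'s. Using Lemma \ref{lemma weil restriction} — which says $R_{F/k}$ commutes with $\pi_0^{\A^1}$, and more generally $\mathbf{R}f_*$ preserves $\A^1$-local objects as recalled in Section \ref{section preliminaries A1-connectedness} — together with the fact that $B_{\mathrm{fppf}}$ turns a short exact sequence of $\mathrm{fppf}$ group sheaves $1\to G'\to G\to G''\to 1$ into an $\A^1$-fiber sequence $B_{\mathrm{fppf}}G'\to B_{\mathrm{fppf}}G\to B_{\mathrm{fppf}}G''$ (and $B_{\mathrm{fppf}}$ of a product is the product), I would reduce to the two cases $G=\G_m$ and $G=\mu_n$. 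For $G=\G_m$ one has $B_{\mathrm{fppf}}\G_m \simeq B_{\mathrm{Nis}}\G_m \simeq K(\G_m,1)$, which is $\A^1$-local because $\G_m = \sO^\times$ is strongly $\A^1$-invariant (it is a classical computation that $\mathrm{Pic}(U\times\A^1)=\mathrm{Pic}(U)$ and $\sO^\times(U\times\A^1)=\sO^\times(U)$ for $U$ smooth, equivalently $B\G_m$ is $\A^1$-rigid in the relevant sense). For $G=\mu_n$ I would use the Kummer sequence $1\to\mu_n\to\G_m\xrightarrow{n}\G_m\to1$, which gives an $\A^1$-fiber sequence $B_{\mathrm{fppf}}\mu_n \to B_{\mathrm{fppf}}\G_m\xrightarrow{n}B_{\mathrm{fppf}}\G_m$, i.e.\ $B_{\mathrm{fppf}}\mu_n$ is the homotopy fiber of a self-map of the $\A^1$-local object $B\G_m$; since $\A^1$-local objects are closed under homotopy limits, $B_{\mathrm{fppf}}\mu_n$ is $\A^1$-local.

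Finally I would assemble the pieces, being careful about one technical point: the dévissage via isogenies introduces kernels that are again finite of multiplicative type, so the argument must be organized as an induction on the order of the finite part plus a base case for tori, rather than a single reduction — but this is routine once the three facts (product-compatibility, fiber-sequence-compatibility, and $\A^1$-locality closure under homotopy limits) are in hand. I expect the main obstacle to be purely foundational: verifying that $B_{\mathrm{fppf}}$ genuinely sends $\mathrm{fppf}$-short exact sequences to $\A^1$-fiber sequences of \emph{Nisnevich} simplicial sheaves after applying $i^*$ — i.e.\ that $i^*$ and $\mathbf{R}_{\mathrm{fppf}}$ interact well enough with homotopy fibers — and relatedly that $B_{\mathrm{fppf}}\mu_n$ really is the homotopy fiber of multiplication by $n$ on $B_{\mathrm{fppf}}\G_m$ in the $\A^1$-local model structure; this is where one invokes \cite{Asok-Hoyois-Wendt-2} for the fiber-sequence formalism. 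The $\A^1$-invariance of $H^1_{\mathrm{fppf}}(-,G)$ for $G$ of multiplicative type on smooth schemes (which is what makes all the homotopy sheaves strictly $\A^1$-invariant) is classical and I would simply cite it.
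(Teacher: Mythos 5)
Your treatment of the two split cases is essentially sound: $B_{\rm fppf}\G_m$ is $\A^1$-local by Hilbert 90 together with $\A^1$-invariance of units and of the Picard group, and realizing $B_{\rm fppf}\mu_n$ as the homotopy fiber of multiplication by $n$ on $B_{\rm fppf}\G_m$ (Kummer sequence, taken in the fppf topology and transported to $Sm/k$ via $i^*$ of fibrant objects) does give $\A^1$-locality, since $\A^1$-local objects are closed under homotopy pullbacks. The gap is in the d\'evissage from a general group of multiplicative type over $k$ to these two cases. First, the structural claim you lean on is false: a $k$-torus need not be isogenous to a product of Weil restrictions $R_{F_i/k}\G_m$ --- for instance the norm-one torus $R^1_{F/k}\G_m$ of a cyclic extension of prime degree $p$ is not, since its rational character representation is the $(p-1)$-dimensional irreducible one, which is not a permutation module (Artin induction only yields such a statement after multiplying by an auxiliary quasi-trivial torus). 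Second, and more seriously, your fiber-sequence formalism only propagates $\A^1$-locality to homotopy \emph{fibers}, whereas the d\'evissage you need goes the other way: from $1 \to G' \to G \to G'' \to 1$ with $B_{\rm fppf}G'$ and $B_{\rm fppf}G''$ local you would have to conclude locality of the middle term (and, in the isogeny step, of the base of the fiber sequence). Doing this by continuing the fiber sequence to the right forces you to control second deloopings $B^2_{\rm fppf}\mu$, i.e.\ $\A^1$-invariance of $H^2_{\rm fppf}$, a genuinely stronger input you have not supplied; nor do you address how $\A^1$-locality descends along the finite separable extension that splits $G$, so the induction never gets started over a general field $k$.

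The paper sidesteps all of this by a local-to-global shortcut you could adopt: since $B_{\rm fppf}G$ satisfies \'etale descent, it suffices to check that its sections over $S$ and over $\A^1_S$ agree (as a weak equivalence of simplicial sets) when $S$ is a strict henselization of a local ring of a smooth scheme; over such an $S$ every group of multiplicative type is diagonalizable, hence a product of copies of $\G_m$ and $\mu_n$. One is then reduced to checking $\pi_0 = H^1_{\rm fppf}(-,G)$, which is handled by $\A^1$-invariance of the Picard group over a normal base, and $\pi_1 = G(-)$, which is unchanged because $S$ is reduced and $G$ is $\G_m$ or finite. In other words, the splitting you need is available after strict henselization rather than over $k$, and exploiting \'etale descent is what makes the reduction to $\G_m$ and $\mu_n$ legitimate; your global d\'evissage over $k$, as written, does not go through.
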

\begin{proof}
We imitate the proof of \cite[\textsection 4.3, Proposition 3.1]{Morel-Voevodsky}.  We abuse the notation and continue to denote by $B_{\rm fppf} G$ the restriction to $(Sm/k)_{\rm Nis}$ of the \emph{fppf}-local replacement of the simplicial presheaf $BG$. 

Since $B_{\rm fppf} G$ is etale-local in the sense of Definition \ref{definition t-local replacement} (that is, $B_{\rm fppf} G$ satisfies \'etale descent), it suffices to show that the map
\begin{equation}
\label{equation lemma multiplicative type}
(B_{\rm fppf} G)(S) \to (B_{\rm fppf} G)(\A^1_S)
\end{equation}
induced by the projection $\A^1_S \to S$ is a weak equivalence for every $S$ which is the strict henselization of a local ring of a smooth scheme over $k$.  In order to show this, it suffices to show that the map induced by the map \eqref{equation lemma multiplicative type} on every $\pi_i$ is a bijection. Since $\pi_i(B_{\rm fppf}G)$ is trivial for $i>1$, it suffices to examine the map on $\pi_i$ for $i = 0$ and $1$. Since $G$ is an algebraic group of multiplicative type, it follows that $G$ is diagonalizable over $S$ \cite[Proposition B.3.4]{Conrad-SGA}.  Hence, $G_S$ is a product of group schemes of the form $\G_m$ or $\mu_n$ over $S$, for a natural number $n$.  So without loss of generality, we may assume that $G=\G_m$ or $G=\mu_n$.   

By \cite[Lemma 2.2.2]{Asok-Hoyois-Wendt-2}, $\pi_0(B_{\rm fppf} G)(-) \simeq H^1_{\rm fppf}(-, G)$.  Hence the map induced by \eqref{equation lemma multiplicative type} on $\pi_0$ is the map
\[
H^1_{\rm fppf}(-, G) \to H^1_{\rm fppf}(-, G)
\]
induced by the projection $\A^1_S \to S$.  This map is a bijection by the $\A^1$-invariance of Picard group of schemes (over any normal base-scheme).   

It remains to verify that the map \eqref{equation lemma multiplicative type} is an isomorphism on $\pi_1$'s at the base point.  However, this is just the map $G(S) \to G(\A^1_S)$ (induced by the projection $\A^1_S \to S$), which is clearly an isomorphism, since $S$ is reduced and since $G$ is either $\G_m$ or a finite group. 
\end{proof}

We are now set to prove the main reduction step in the proof of our main theorem.

\begin{proposition}
\label{proposition central isogeny}
Let $G' \to G$ be a central isogeny of reductive algebraic groups.  Suppose that $\Sing G'$ is not $\A^1$-local.  Then $\Sing G$ cannot be $\A^1$-local. 
\end{proposition}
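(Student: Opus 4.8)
The plan is to realise $\Sing G'$ as the homotopy fiber of a morphism between two $\A^1$-local simplicial sheaves, and then to invoke the fact that $\A^1$-local objects are closed under homotopy limits; accordingly I shall prove the contrapositive of the assertion, namely that if $\Sing G$ is $\A^1$-local then so is $\Sing G'$.

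First I would record the relevant fiber sequence. Since $G' \to G$ is a central isogeny, its kernel $\mu$ is a finite central $k$-subgroup scheme of $G'$ of multiplicative type, and it fits into a short exact sequence of $fppf$-sheaves of groups on $Sch/k$
\[
1 \longrightarrow \mu \longrightarrow G' \longrightarrow G \longrightarrow 1,
\]
exhibiting $G' \to G$ as an $fppf$-torsor under $\mu$. Following the formalism of \cite[\textsection 2]{Asok-Hoyois-Wendt-2}, passing to $fppf$-classifying spaces and restricting to $(Sm/k)_{\rm Nis}$ via $i^*$ gives a simplicial fiber sequence of simplicial Nisnevich sheaves
\[
G' \longrightarrow G \overset{\partial}{\longrightarrow} B_{\rm fppf}\mu,
\]
where $\partial$ classifies the torsor $G' \to G$. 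Since $\mu$ is of multiplicative type, Lemma \ref{lemma classifying spaces} shows that $B_{\rm fppf}\mu$ is $\A^1$-local; moreover the proof of that lemma in fact gives that $(B_{\rm fppf}\mu)(S) \to (B_{\rm fppf}\mu)(\A^1_S)$ is a weak equivalence for every strictly henselian local $S$, and iterating this over the algebraic simplices $\Delta_\bullet$ yields that the canonical map $B_{\rm fppf}\mu \to \Sing B_{\rm fppf}\mu$ is a simplicial weak equivalence; in particular $\Sing B_{\rm fppf}\mu$ is again $\A^1$-local.

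Now suppose $\Sing G$ is $\A^1$-local. Applying $\Sing$ to the fiber sequence above, and using that $\Sing$ preserves homotopy fiber sequences---it commutes with limits and is compatible with fibrations and with weak equivalences between fibrant objects, cf. \cite[\textsection 2]{Morel-Voevodsky}---together with $\Sing B_{\rm fppf}\mu \simeq B_{\rm fppf}\mu$, I obtain a simplicial fiber sequence
\[
\Sing G' \longrightarrow \Sing G \longrightarrow B_{\rm fppf}\mu
\]
in which both $\Sing G$ (by hypothesis) and $B_{\rm fppf}\mu$ (by Lemma \ref{lemma classifying spaces}) are $\A^1$-local. Since the class of $\A^1$-local simplicial sheaves is closed under homotopy limits, the homotopy fiber $\Sing G'$ is $\A^1$-local as well. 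This is precisely the contrapositive of the statement: if $\Sing G'$ is not $\A^1$-local, then neither is $\Sing G$.

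The step I expect to demand the most care is the compatibility of $\Sing$ with the fiber sequence $G' \to G \to B_{\rm fppf}\mu$: because $G' \to G$ is only an $fppf$-torsor and need not split Nisnevich-locally, one has to track carefully the interaction of $\Sing$, of the restriction functor $i^*$ from $Sch/k$ to $Sm/k$, and of the fibrant replacements in the two model structures, and verify that $\Sing$ still produces a homotopy fiber sequence of Nisnevich sheaves; the refinement of Lemma \ref{lemma classifying spaces} recorded above---that $B_{\rm fppf}\mu$ is not merely $\A^1$-local but is left unchanged by $\Sing$---is exactly what keeps the base of the fiber sequence under control. A closely related variant would instead apply $L_{\A^1}$ throughout: since $B_{\rm fppf}\mu$ is already $\A^1$-local one obtains an $\A^1$-fiber sequence $L_{\A^1}G' \to L_{\A^1}G \to B_{\rm fppf}\mu$, and one then transports $\A^1$-locality of $\Sing$ from $G$ to $G'$ by comparing the long exact sequence of $\A^1$-homotopy sheaves of this $\A^1$-fiber sequence with the corresponding data coming from $\Sing$.
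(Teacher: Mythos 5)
Your overall plan---realise $\Sing G'$ as the homotopy fiber of a map between $\A^1$-local objects and use closure of $\A^1$-local objects under homotopy pullbacks, arguing the contrapositive---is a legitimate and even somewhat leaner endgame than the paper's (which upgrades the sequence to an $\A^1$-fiber sequence via \cite[Theorem 6.50]{Morel} and concludes with the five lemma and the $\A^1$-Whitehead theorem). However, there is a genuine gap at exactly the step you flag: the claim that $\Sing$ preserves homotopy fiber sequences because it ``commutes with limits and is compatible with fibrations and weak equivalences'' is false in general. Sections of $\Sing\sX$ are diagonals of the bisimplicial objects $\sX(\Delta^\bullet \times -)$, and the diagonal of a levelwise fibration need not be a fibration, nor does a levelwise homotopy cartesian square stay homotopy cartesian on diagonals without a Bousfield--Friedlander-type $\pi_*$-Kan hypothesis; indeed, the failure of $\Sing$ to preserve fiber sequences is the very source of the phenomena this paper studies. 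Your proposed remedy---that $B_{\rm fppf}\mu \to \Sing B_{\rm fppf}\mu$ is a weak equivalence, which is correct and follows from the proof of Lemma \ref{lemma classifying spaces}---does not fill the gap: knowing the base is unchanged by $\Sing$ says nothing about whether the square defining the homotopy fiber remains homotopy cartesian after applying $\Sing$.

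What is actually needed, and what the paper invokes at precisely this point, is \cite[Proposition 2.1.1]{Asok-Hoyois-Wendt-2}: $\Sing$ preserves the simplicial fiber sequence $G' \to G \to B_{\rm fppf}\mu$ because the presheaf $\pi_0(B_{\rm fppf}\mu)(-) \simeq H^1_{\rm fppf}(-,\mu)$ is $\A^1$-invariant on smooth schemes, $\mu$ being of multiplicative type. Once you supply this input, your argument does go through: $\Sing G'$ is the homotopy fiber of a map from $\Sing G$ (assumed $\A^1$-local) to $B_{\rm fppf}\mu$ ($\A^1$-local by Lemma \ref{lemma classifying spaces}), and $\A^1$-local objects are stable under homotopy pullback, so $\Sing G'$ is $\A^1$-local; this avoids the five lemma and the $\A^1$-Whitehead theorem altogether. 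The same caveat applies to the variant sketched at the end of your proposal: the assertion that the sequence remains a fiber sequence after applying $L_{\A^1}$ does not follow merely from $\A^1$-locality of $B_{\rm fppf}\mu$; the paper justifies it by observing that $\mu$, which is $\pi_1$ of the base, is a strongly $\A^1$-invariant sheaf and then appealing to \cite[Theorem 6.50]{Morel}.
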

\begin{proof}
Suppose, if possible, that $\Sing(G)$ is $\A^1$-local.  Let $\mu$ denote the kernel of the given central isogeny $G' \to G$. Then we have a sequence
\[
G' \to G \to \mathbf{R}_{\rm fppf}(B\mu) 
\]
which is a fiber sequence in the model category of simplicial \emph{fppf}-sheaves on $Sch/k$ (with the \v{C}ech $fppf$-local injective model structure).  The restriction functor $i^*$, from the category of simplicial sheaves on $Sch/k$ to the category of simplicial sheaves on $Sm/k$, preserves objectwise fiber sequences.  An objectwise fiber sequence is a fiber sequence in $\sH_s(k)$. Thus, we have a simplicial fiber sequence
\[
G' \to G \to B_{\rm fppf}\mu.
\]
Note that $\mu$ is a group of multiplicative type, being contained in the center of the reductive group $G'$.  Since $\mu$ is $\A^1$-rigid and since $B_{\rm fppf}\mu$ is $\A^1$-local by Lemma \ref{lemma classifying spaces}, it follows that $\pi_0^{\A^1}(\mu) \simeq \mu$ is a strongly $\A^1$-invariant sheaf, in the sense of \cite[Definition 1.7]{Morel}.  Therefore, by \cite[Theorem 6.50]{Morel}, the simplicial fiber sequence 
\[
G' \to G \to B_{\rm fppf}\mu
\]
is also an $\A^1$-fiber sequence.  The associated long exact sequence of homotopy groups gives us the following commutative diagram with exact rows, for every $i\geq 0$:
\begin{SMALL}
\begin{equation}
\label{equation commutative diagram}
\minCDarrowwidth 10pt
\begin{CD}
\pi_{i+1}^s(\Sing G) @>>> \pi_{i+1}^s(\Sing B_{\rm fppf}\mu) @>>> \pi_{i}^s(\Sing G') @>>> \pi_{i}^s(\Sing G) @>>> \pi_{i}^s(\Sing B_{\text{fppf}}\mu) \\
@VV{\simeq}V @VV{\simeq}V @VVV @VV{\simeq}V @VV{\simeq}V \\
\pi_{i+1}^{\A^1}(G) @>>> \pi_{i+1}^{\A^1}(B_{\rm fppf}\mu) @>>> \pi_i^{\A^1}(G') @>>> \pi_i^{\A^1}(G) @>>> \pi_i^{\A^1}(B_{\text{fppf}}\mu) .
\end{CD} 
\end{equation}
\end{SMALL}
Here the first row is obtained as follows: since $\pi_0(B_{\rm fppf} \mu)(-) \simeq H^1_{\rm fppf}(-, \mu)$ is an $\A^1$-invariant presheaf, the functor $\Sing$ preserves simplicial fiber sequences \cite[Proposition 2.1.1]{Asok-Hoyois-Wendt-2}; we then take the associated long exact sequence of simplicial homotopy groups.  All the vertical maps are induced by the natural transformation of functors $\Sing \to L_{\A^1}$.  In the diagram \eqref{equation commutative diagram}, the second and the last vertical arrows are isomorphisms since $B_{\rm fppf}\mu$ is $\A^1$-local (Lemma \ref{lemma classifying spaces}); and the first and fourth vertical arrows are isomorphisms since $\Sing(G)$ is $\A^1$-local.  It follows from five lemma that the map $\pi_{i}^s(\Sing G') \to \pi_{i}^{\A^1}(G')$ is an isomorphism for all $i\geq 0$.  
This shows that the natural map $\Sing G' \to L_{\A^1}(G')$ is a weak equivalence, by the $\A^1$-Whitehead theorem \cite[\textsection 3, Proposition 2.14, p. 110]{Morel-Voevodsky}.  Consequently, $\Sing G'$ is $\A^1$-local, contradicting the hypothesis.
\end{proof}

We now prove the main theorem of this section.

\begin{theorem}
\label{theorem failure of A1-locality}
Let $G$ be a reductive algebraic group over an infinite perfect field $k$ that does not satisfy Hypothesis \ref{hypothesis isotropy}.  Then $\Sing G$ is not $\A^1$-local.
\end{theorem}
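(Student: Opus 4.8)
The plan is to reduce to Corollary \ref{corollary IMRN}, which already settles the case of a semisimple, simply connected group possessing an anisotropic almost $k$-simple factor, using Proposition \ref{proposition central isogeny} to propagate the failure of $\A^1$-locality along central isogenies. Concretely, I would connect $G$ to its simply connected object by the two central isogenies recalled in Section \ref{section preliminaries algebraic groups}: the central isogeny $\pi\colon G_{\rm sc}\to G_{\rm der}$ of the semisimple group $G_{\rm der}$, and the central isogeny $G_{\rm der}\times T\to G$, where $T$ is the radical torus. Both $\pi$ and $G_{\rm der}\times T\to G$ are central isogenies of reductive groups, so Proposition \ref{proposition central isogeny} applies to each. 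Thus it suffices to prove that $\Sing G_{\rm sc}$ is not $\A^1$-local, modulo passing the failure across the torus factor $T$.

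The torus step is handled by a retract argument: since $\Sing$ commutes with products, $\Sing(G_{\rm der}\times T)\simeq \Sing G_{\rm der}\times \Sing T$, and the unit section $\Spec k\to T$ together with the projection $T\to\Spec k$ exhibits $\Sing G_{\rm der}$ as a retract of $\Sing(G_{\rm der}\times T)$ in $\sH_s(k)$. As being $\A^1$-local (a bijectivity condition on $\Hom_{\sH_s(k)}(U\times\A^1,-)\leftarrow\Hom_{\sH_s(k)}(U,-)$) passes to retracts, if $\Sing G_{\rm der}$ is not $\A^1$-local then neither is $\Sing(G_{\rm der}\times T)$. So the chain of implications is: $\Sing G_{\rm sc}$ not $\A^1$-local $\Rightarrow$ (Proposition \ref{proposition central isogeny} for $\pi$) $\Sing G_{\rm der}$ not $\A^1$-local $\Rightarrow$ (retract) $\Sing(G_{\rm der}\times T)$ not $\A^1$-local $\Rightarrow$ (Proposition \ref{proposition central isogeny} for $G_{\rm der}\times T\to G$) $\Sing G$ not $\A^1$-local.

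It remains to show that failure of Hypothesis \ref{hypothesis isotropy} for $G$ forces $G_{\rm sc}$ to have an anisotropic almost $k$-simple factor, so that Corollary \ref{corollary IMRN} applies. Write $G_{\rm sc}=\tilde H_1\times\cdots\times\tilde H_m$ as the product of its almost $k$-simple simply connected factors. If every $\tilde H_i$ were isotropic, pick a $k$-subgroup $\G_m\hookrightarrow\tilde H_i$ in each; composing $\G_m\hookrightarrow\tilde H_i\hookrightarrow G_{\rm sc}$ with $\pi$, the scheme-theoretic image is a closed $k$-subgroup of $G_{\rm der}$ isomorphic to the quotient of $\G_m$ by a finite subgroup scheme. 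Since every finite subgroup scheme of $\G_m$ is some $\mu_n$ and the $n$-th power map gives $\G_m/\mu_n\simeq\G_m$, this image is a copy of $\G_m$, contained in the almost $k$-simple factor $\pi(\tilde H_i)$ of $G_{\rm der}$. As the subgroups $\pi(\tilde H_i)$ are precisely the almost $k$-simple factors of $G_{\rm der}$ (structure theory of semisimple groups), every almost $k$-simple factor of $G_{\rm der}$ would contain a $\G_m$, i.e., $G$ would satisfy Hypothesis \ref{hypothesis isotropy}, a contradiction. Hence some $\tilde H_i$ is anisotropic; by Corollary \ref{corollary IMRN}, $\Sing G_{\rm sc}$ is not $\A^1$-local, and the chain above yields the theorem.

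The main obstacle, such as it is, is this last transfer of anisotropy across the central isogeny $\pi\colon G_{\rm sc}\to G_{\rm der}$; the essential and only computational point is that the image of a one-dimensional split torus under an isogeny is again a split torus, because finite subgroup schemes of $\G_m$ are the $\mu_n$ and $\G_m/\mu_n\simeq\G_m$. Everything else is bookkeeping with Proposition \ref{proposition central isogeny}, the product decomposition of $G_{\rm sc}$, and the retract argument for the radical torus.
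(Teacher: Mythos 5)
Your proposal is correct and takes essentially the same route as the paper: reduce along the central isogenies $G_{\rm der}\times T\to G$ and $G_{\rm sc}\to G_{\rm der}$ via Proposition \ref{proposition central isogeny}, dispose of the radical torus using that $\Sing$ commutes with products, and conclude by Corollary \ref{corollary IMRN} after observing that failure of Hypothesis \ref{hypothesis isotropy} forces an anisotropic almost $k$-simple factor of $G_{\rm sc}$. Your retract argument for the torus factor and the explicit check that isotropy descends along $\pi$ (via $\G_m/\mu_n\simeq\G_m$) are just details the paper's proof leaves implicit.
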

\begin{proof}
Since $G$ is reductive, there exists a central isogeny
\[
G_{\rm der} \times T \to G,
\]
\noindent where $G_{\rm der}$ is a semisimple group (the derived group of $G$) and $T$ is a torus (the radical of $G$).  By Proposition \ref{proposition central isogeny}, it suffices to show that $\Sing(G_{\rm der} \times T)$ is not $\A^1$-local.  Note that $\Sing$ commutes with products and that $\Sing T$ is $\A^1$-local.  Therefore, we are reduced to showing that $\Sing G_{\rm der}$ is not $\A^1$-local. 

Let $G_{\rm sc}$ denote the simply connected cover of $G_{\rm der}$.  There exists a central isogeny $G_{\rm sc} \to G_{\rm der}$.  Again by Proposition \ref{proposition central isogeny}, it suffices to prove that $\Sing(G_{\rm sc})$ is not $\A^1$-local.  Let $G_1, \ldots, G_n$ be the almost $k$-simple factors of $G_{\rm sc}$; we have an isomorphism $G_1 \times \cdots \times G_n \stackrel{\sim}{\to} G_{\rm sc}$.  If $G$ does not satisfy Hypothesis \ref{hypothesis isotropy}, then $G_{\rm der}$ has at least one anisotropic almost $k$-simple factor.  Therefore, there exists $i \in \{1, \ldots, n\}$ such that $G_i$ is anisotropic.  By Corollary \ref{corollary IMRN}, we conclude that $\Sing G_i$ is not $\A^1$-local.  Hence, $\Sing(G_1 \times \cdots \times G_n) \simeq \Sing G_{\rm sc}$ cannot be $\A^1$-local.  Thus, if $G$ does not satisfy Hypothesis \ref{hypothesis isotropy}, then $\Sing G$ cannot be $\A^1$-local.
\end{proof}

We end the section by recording a proof of Theorem \ref{intro theorem 1}, stated in the intoduction, by putting together Theorem \ref{theorem failure of A1-locality} and relevant results from \cite{Asok-Hoyois-Wendt-2}.

\begin{theorem}
\label{theorem equivalence}
Let $G$ be a reductive algebraic group over an infinite perfect field $k$.  Then the following conditions are equivalent: 
\begin{itemize}
\item[(1)] $\Sing G$ is $\A^1$-local;
\item[(2)] $G$ satisfies the isotropy hypothesis $(\ast)$ (Hypothesis \ref{hypothesis isotropy});
\item[(3)] The presheaf $H^1_{\rm Nis}(-, G)$ is $\A^1$-invariant on smooth affine schemes over $k$.
\end{itemize}

\end{theorem}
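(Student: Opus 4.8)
The plan is to prove the cyclic chain of implications $(1) \Rightarrow (2) \Rightarrow (3) \Rightarrow (1)$; two of the three arrows are already available, so the real task is to observe that the remaining arrow is exactly Theorem \ref{theorem failure of A1-locality} and to record how the cycle closes.

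The implication $(1) \Rightarrow (2)$ is precisely the contrapositive of Theorem \ref{theorem failure of A1-locality}, which we have just proved: a reductive $k$-group failing the isotropy hypothesis $(\ast)$ has $\Sing G$ not $\A^1$-local. (Conversely, $(2) \Rightarrow (1)$ is the $\A^1$-locality of $\Sing G$ for a reductive group satisfying $(\ast)$, due to Asok--Hoyois--Wendt \cite{Asok-Hoyois-Wendt-2}; this is not strictly needed for the cycle, but it completes the picture.)

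The implication $(2) \Rightarrow (3)$ is the affine homotopy invariance of Nisnevich-locally trivial $G$-torsors recalled in the introduction: for $G$ satisfying $(\ast)$ and every smooth affine $U$ over $k$, the pullback $H^1_{\rm Nis}(U,G) \to H^1_{\rm Nis}(U \times \A^1, G)$ is a bijection, by the theorem of Asok--Hoyois--Wendt \cite{Asok-Hoyois-Wendt-2} (building on Lindel and Raghunathan as recalled above). For the converse $(3) \Rightarrow (1)$, the idea is to feed condition $(3)$ into the affine representability machinery of \cite{Asok-Hoyois-Wendt-2}: $\A^1$-invariance of the presheaf $H^1_{\rm Nis}(-,G) = \pi_0(B_{\rm fppf}G)$, more precisely of its Nisnevich variant $\pi_0(B_{\rm Nis}G)$, on smooth affine schemes, together with the fact --- automatic for any sheaf of groups --- that a Nisnevich-locally trivial $G$-torsor over a smooth affine henselian local $k$-scheme is trivial, and with affine Nisnevich excision, shows that the simplicial classifying space $B_{\rm Nis}G$ becomes $\A^1$-naive on smooth affine schemes over $k$; since $G \simeq \Omega B_{\rm Nis}G$ (and $B_{\rm Nis}G$ has no higher homotopy), and since condition $(3)$ also guarantees that $\Sing$ preserves the relevant simplicial fibre sequence, one deduces that $\Sing G$ is $\A^1$-local. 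This closes the cycle, so the three conditions are equivalent.

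The one genuinely new input --- and hence the main obstacle --- is the implication $(1) \Rightarrow (2)$, that is, Theorem \ref{theorem failure of A1-locality}: its proof runs through Corollary \ref{corollary IMRN} (failure of $\A^1$-locality for an anisotropic semisimple simply connected almost-$k$-simple group, coming from the discrepancy between $R$-equivalence classes and $\A^1$-chain connected components over the base field), propagated across arbitrary central isogenies by Proposition \ref{proposition central isogeny}. In assembling the present theorem, the only point that deserves care is to check that the passage from ``$B_{\rm Nis}G$ is affine-$\A^1$-naive'' to ``$\Sing G$ is $\A^1$-local'' in $(3) \Rightarrow (1)$ is purely homotopy-theoretic and does not quietly reintroduce an isotropy hypothesis: it does not, since in \cite{Asok-Hoyois-Wendt-2} the isotropy of $G$ is used only to establish condition $(3)$, never to draw consequences from it.
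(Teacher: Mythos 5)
Your proposal is correct and follows essentially the same route as the paper: the implication $(1) \Rightarrow (2)$ is exactly Theorem \ref{theorem failure of A1-locality}, while $(2) \Rightarrow (3)$ and $(3) \Rightarrow (1)$ are quoted from Asok--Hoyois--Wendt (Theorems 3.3.3 and 2.3.2 of \cite{Asok-Hoyois-Wendt-2}), with your sketch of $(3) \Rightarrow (1)$ merely paraphrasing the content of the latter citation rather than giving a different argument.
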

\begin{proof}
The implication $(1) \Rightarrow (2)$ follows from Theorem \ref{theorem failure of A1-locality}.  The implication $(2) \Rightarrow (3)$ is proved in \cite[Theorem 3.3.3]{Asok-Hoyois-Wendt-2}, whereas the implication $(3) \Rightarrow (1)$ is proved in \cite[Theorem 2.3.2]{Asok-Hoyois-Wendt-2}.
\end{proof}

\subsection{Failure of affine homotopy invariance for \texorpdfstring{$G$}{G}-torsors}
\label{subsection failure A1-invariance}

In \cite[Theorem 3.3.6]{Asok-Hoyois-Wendt-2}, it is shown that if $G$ is a reductive algebraic group over an infinite field $k$ satisfying Hypothesis \ref{hypothesis isotropy} and $A$ is a smooth affine $k$-algebra, then the map 
\[
H^1_{\rm Nis}(\Spec A, G) \to H^1_{\rm Nis}(\Spec A[t_1, \ldots, t_n], G)
\]
induced by the projection $\Spec A[t_1, \ldots, t_n] \to \Spec A$ is a bijection for all $n \geq 0$.  In view of the Grothendieck-Serre conjecture (see \cite{Colliot-Thelene-Ojanguren-1995}, \cite{Raghunathan-1994}, \cite{Fedorov-Panin}), Nisnevich locally trivial $G$-torsors are Zariski locally trivial, where $G$ is a connected reductive group over an infinite perfect field $k$.  Therefore, \cite[Theorem 3.3.6]{Asok-Hoyois-Wendt-2} can be seen as a generalization of the results of Lindel \cite{Lindel} (the case $G = GL_n$) and Raghunathan \cite{Raghunathan-1989} (the case where $G$ satisfies Hypothesis \ref{hypothesis isotropy} and $A=k$).  Counterexamples to affine homotopy invariance of $G$-torsors were found in case the group $G$ does not satisfy Hypothesis \ref{hypothesis isotropy} by Ojanguren-Sridharan \cite{Ojanguren-Sridharan} and Parimala \cite{Parimala}.  A general result about failure of affine homotopy invariance is due to Raghunathan \cite[Theorem B]{Raghunathan-1989}, where it is shown that if $G$ is an anisotropic, absolutely almost simple group not of type $F_4$ or $G_2$ and satisfying a technical condition (there exists a group $G'$ in the central isogeny class of $G$ and an embedding of $G'$ in a connected reductive group $H$ as a closed normal subgroup such that $H$ is a $k$-rational variety and such that $H/G'$ is a torus) which holds for groups of classical type, then there are infinitely many mutually non-isomorphic $G$-bundles on $\A^2_k$ that are not extended from $\Spec k$. 

A straightforward application of Theorem \ref{theorem equivalence} shows that torsors for reductive groups not satisfying Hypothesis \ref{hypothesis isotropy} fail to be $\A^1$-invariant on smooth affine schemes over an infinite perfect field.  In the case of semisimple, simply connected, absolutely almost simple anisotropic groups, this was shown by Asok, Hoyois and Wendt in \cite[Proposition 3.3.7]{Asok-Hoyois-Wendt-2} using \cite[Corollary 3]{Balwe-Sawant}.  Theorem \ref{theorem failure of A1-locality} generalizes \cite[Corollary 3]{Balwe-Sawant} to all reductive algebraic groups not satisfying Hypothesis \ref{hypothesis isotropy} and hence generalizes \cite[Proposition 3.3.7]{Asok-Hoyois-Wendt-2} to all such groups using the same method.  We end this section by formally stating the result for the sake of completeness.

\begin{proposition}
\label{proposition failure of A1-invariance}
Let $G$ be a reductive algebraic group over an infinite perfect field $k$, which does not satisfy Hypothesis \ref{hypothesis isotropy}.  Then the presheaf $H^1_{\rm Nis}(-, G)$ cannot be $\A^1$-invariant on smooth affine schemes over $k$. 
\end{proposition}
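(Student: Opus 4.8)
The plan is to deduce this as a formal consequence of Theorem \ref{theorem equivalence}, whose substantive content is precisely Theorem \ref{theorem failure of A1-locality}. First, since $G$ does not satisfy Hypothesis \ref{hypothesis isotropy}, condition $(2)$ of Theorem \ref{theorem equivalence} fails for $G$. By the chain of equivalences established there, condition $(3)$ must fail as well; that is, the presheaf $H^1_{\rm Nis}(-, G)$ cannot be $\A^1$-invariant on smooth affine schemes over $k$. This finishes the argument.

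Equivalently, one can spell out the relevant implication directly via its contrapositive, which avoids invoking the full equivalence: by Theorem \ref{theorem failure of A1-locality}, the simplicial sheaf $\Sing G$ is not $\A^1$-local. If $H^1_{\rm Nis}(-, G)$ were $\A^1$-invariant on all smooth affine $k$-schemes, then by \cite[Theorem 2.3.2]{Asok-Hoyois-Wendt-2} it would follow that $\Sing G$ is $\A^1$-local, contradicting Theorem \ref{theorem failure of A1-locality}. Thus $H^1_{\rm Nis}(-, G)$ is not $\A^1$-invariant on smooth affine schemes over $k$. (One should remark that this generalizes \cite[Proposition 3.3.7]{Asok-Hoyois-Wendt-2} from semisimple, simply connected, absolutely almost simple anisotropic groups to arbitrary reductive groups failing Hypothesis \ref{hypothesis isotropy}, by the same method, now that Theorem \ref{theorem failure of A1-locality} is available in full generality.)

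There is no genuine obstacle in this final step: the real work is already done in Theorem \ref{theorem failure of A1-locality} (via Proposition \ref{proposition central isogeny}, the reduction to the simply connected cover, and Corollary \ref{corollary IMRN}). The only points requiring care are bookkeeping ones: one must check that the hypotheses match those of \cite[Theorem 2.3.2]{Asok-Hoyois-Wendt-2} — namely $k$ infinite (and perfect, as assumed throughout here) — and one must be careful that the relevant notion of $\A^1$-invariance is invariance of $H^1_{\rm Nis}(-,G)$ on \emph{every} smooth affine $k$-scheme, which is exactly the formulation of condition $(3)$ in Theorem \ref{theorem equivalence}, rather than the weaker statement of homotopy invariance along affine spaces $\Spec A \to \Spec A[t_1,\dots,t_n]$ discussed at the start of Section \ref{subsection failure A1-invariance}.
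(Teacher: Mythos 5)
Your proposal is correct and is essentially the paper's own argument: the paper records this proposition as a direct consequence of Theorem \ref{theorem equivalence} (equivalently, of Theorem \ref{theorem failure of A1-locality} combined with the contrapositive of \cite[Theorem 2.3.2]{Asok-Hoyois-Wendt-2}), exactly as you spell out. Your remarks on the hypotheses and on the precise formulation of $\A^1$-invariance match the paper's discussion preceding the proposition.
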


\section{Characterization of \texorpdfstring{$\A^1$}{A1}-connectedness in reductive groups and applications}
\label{section characterization of A1-connectedness}

\subsection{\texorpdfstring{$\A^1$}{A1}-connected reductive algebraic groups}

In this section, we characterize $\A^1$-connected reductive algebraic groups over a field of characteristic $0$.  We first treat the case where the base field is algebraically closed.

\begin{theorem}
\label{theorem algebraically closed}
Let $G$ be a reductive algebraic group over an algebraically closed field $k$. Then $G$ is $\A^1$ -connected if and only if it is semisimple and simply connected.
\end{theorem}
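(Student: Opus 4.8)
The plan is to prove the two implications separately; the ``only if'' direction will be the substantial one. For the ``if'' direction, suppose $G$ is semisimple and simply connected. Since $k$ is algebraically closed, $G$ is split, and I would invoke the classical fact that for a split, semisimple, simply connected group $G$ and every local ring $R$, the group $G(R)$ is generated by the $R$-points of the root subgroups $U_\alpha \cong \G_a$ attached to a maximal split torus (for $R$ a field this is part of the Bruhat decomposition; the general local case goes back to work on Chevalley groups over rings). For a root $\alpha$ and $a \in R$, the assignment $t \mapsto u_\alpha(ta)$ is an $\A^1$-homotopy $\A^1_R \to G$ from the constant morphism $e$ to $u_\alpha(a)$; hence every point of $G(R)$ is $\A^1$-chain homotopic to the identity. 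Taking $R$ to be the henselian local ring at a point of a smooth $k$-scheme, this shows that all stalks of $\sS(G)$ are trivial, so $\sS(G) \simeq \ast$, and then the canonical epimorphism $\sS(G) \to \pi_0^{\A^1}(G)$ gives $\pi_0^{\A^1}(G) \simeq \ast$. (Alternatively, in characteristic $0$ one may cite Theorem \ref{theorem IMRN}: a split semisimple group is rational \cite[V.15.8]{Borel}, hence $R$-trivial, so $\pi_0^{\A^1}(G)(F) \simeq G(F)/R \simeq \ast$ for every field extension $F/k$.)

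For the ``only if'' direction, assume $\pi_0^{\A^1}(G) \simeq \ast$. I would first show that $G$ is semisimple. The coradical ${\rm corad}(G) = G/G_{\rm der}$ is a $k$-torus, hence $\A^1$-rigid; therefore $\Hom_{\sH(k)}(G, {\rm corad}(G))$ is in bijection both with the set of morphisms of schemes $G \to {\rm corad}(G)$ and, via the map $G \to \pi_0^{\A^1}(G)$, with $\Hom(\pi_0^{\A^1}(G), {\rm corad}(G))$. As $\pi_0^{\A^1}(G) \simeq \ast$, every morphism of schemes $G \to {\rm corad}(G)$ is thus constant; but the quotient map $G \to {\rm corad}(G)$ is faithfully flat and surjective, so ${\rm corad}(G) = \Spec k$ and $G$ is semisimple.

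It remains to show that a semisimple, $\A^1$-connected $G$ over an algebraically closed field is simply connected. Suppose not, and let $\pi\colon G_{\rm sc} \to G$ be the simply connected central cover, with kernel $\mu \neq e$ a finite group of multiplicative type central in $G_{\rm sc}$. The strategy is to compute $\pi_0^{\A^1}(G)$ via $\sS(G)$ and exhibit a nontrivial section. For a henselian local ring $R$ essentially smooth over $k$, I claim that the set of $\A^1$-chain homotopy classes in $G(R)$ is exactly $G(R)/\im\big(G_{\rm sc}(R) \to G(R)\big)$: the image of $G_{\rm sc}(R)$ consists of elements $\A^1$-chain homotopic to $e$ because, $G_{\rm sc}$ being split, each such element is a product of root subgroup elements, which $\pi$ sends to root subgroup elements of $G$; conversely, if $g, g' \in G(R)$ are $\A^1$-chain homotopic then the connecting map $\delta_R\colon G(R) \to H^1_{\rm fppf}(R, \mu)$ of $1 \to \mu \to G_{\rm sc} \to G \to 1$ sends them to the same class, since an $\A^1$-homotopy between them yields a class in $H^1_{\rm fppf}(\A^1_R, \mu) = H^1_{\rm fppf}(R, \mu)$ restricting to both endpoints (here one uses $\A^1$-invariance of $H^1_{\rm fppf}(-, \mu)$ over a regular base). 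Passing to stalks identifies $\sS(G)$ with the Nisnevich subsheaf $\ker\big(H^1_{\rm fppf}(-, \mu) \to H^1_{\rm fppf}(-, G_{\rm sc})\big)$. This sheaf is $\A^1$-invariant: $H^1_{\rm fppf}(-, \mu)$ is, as $\mu$ is of multiplicative type, and $H^1_{\rm fppf}(-, G_{\rm sc}) = H^1_{\rm Nis}(-, G_{\rm sc})$ (Grothendieck--Serre) is $\A^1$-invariant on smooth affine $k$-schemes by \cite[Theorem 3.3.3]{Asok-Hoyois-Wendt-2}, because the split group $G_{\rm sc}$ satisfies Hypothesis \ref{hypothesis isotropy}. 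Hence, by the description $\pi_0^{\A^1}(G) \simeq \underset{n}{\varinjlim}\,\sS^n(G)$, the iteration stabilizes at $n = 1$ and $\pi_0^{\A^1}(G) \simeq \sS(G)$. Finally this is nontrivial: over $F = k(t)$, of cohomological dimension $\leq 1$, Serre's Conjecture~I (Steinberg's theorem) gives $H^1(F, G_{\rm sc}) = 0$, so $\sS(G)(F) = H^1_{\rm fppf}(F, \mu)$, which surjects via a quotient $\mu \to \mu_n$ with $n > 1$ onto $F^\ast/(F^\ast)^n \neq 0$. This contradicts $\pi_0^{\A^1}(G) \simeq \ast$, so $G$ is simply connected.

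The main obstacle is this last step, and specifically the identification $\pi_0^{\A^1}(G) \simeq \ker\big(H^1_{\rm fppf}(-, \mu) \to H^1_{\rm fppf}(-, G_{\rm sc})\big)$: one must pin down the relation of $\A^1$-chain homotopy on $G(R)$ precisely (unipotent generation of the split cover in one direction, $\A^1$-invariance of $H^1_{\rm fppf}(-, \mu)$ in the other) and then control the colimit $\underset{n}{\varinjlim}\,\sS^n(G)$, which is what makes it necessary to feed in the affine homotopy invariance theorem of Asok--Hoyois--Wendt for torsors under the split group $G_{\rm sc}$. Everything else---the ``if'' direction and the passage to semisimplicity---is comparatively soft.
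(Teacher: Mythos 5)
Your ``if'' direction and your semisimplicity step are fine, and the latter is in fact a softer argument than the paper's: you use only that ${\rm corad}(G)$ is $\A^1$-rigid, so that scheme maps $G\to{\rm corad}(G)$ factor through $\pi_0^{\A^1}(G)=\ast$, whereas the paper invokes \cite[Theorem 4.3.1]{Asok-Hoyois-Wendt-2} (note that over an algebraically closed field every reductive $G$ satisfies Hypothesis \ref{hypothesis isotropy}) to get $\sS(G)\simeq\pi_0^{\A^1}(G)$ and then kills ${\rm corad}(G)$ on $k$-points. For simple connectedness the paper argues quite differently from you and avoids any computation of $\sS(G)$: from the $\A^1$-fiber sequence $G_{\rm sc}\to G\to B_{\rm fppf}\mu$ (Lemma \ref{lemma classifying spaces} plus Morel's theorem), triviality of $\pi_0^{\A^1}(G)$ forces the $\mu$-torsor $G_{\rm sc}\times_{G,\eta}\Spec k(G)$ over the generic point of $G$ to be trivial; the resulting section makes $k(G)\to k(G_{\rm sc})$ an isomorphism, so the isogeny has degree $1$. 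No Steinberg, no elementary generation over local rings.

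Your computational route through $\sS(G)\simeq\ker\bigl(H^1_{\rm fppf}(-,\mu)\to H^1_{\rm fppf}(-,G_{\rm sc})\bigr)$ can be salvaged, but the step you yourself single out as the crux is flawed as written. First, the identification $H^1_{\rm fppf}(-,G_{\rm sc})=H^1_{\rm Nis}(-,G_{\rm sc})$ is false in general: Grothendieck--Serre identifies Nisnevich-locally trivial torsors with generically trivial ones, and over smooth affine varieties whose function field has cohomological dimension $\geq 3$ there exist \'etale $G_{\rm sc}$-torsors that are not generically trivial (e.g.\ type $G_2$ and anisotropic $3$-Pfister forms over $k(x,y,z)$), so AHW's Nisnevich statement does not give $\A^1$-invariance of $H^1_{\rm fppf}(-,G_{\rm sc})$. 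Second, even granting $\A^1$-invariance of the kernel \emph{presheaf} (which in fact follows trivially: an extended $G_{\rm sc}$-torsor on $U\times\A^1$ that is trivial is trivial on $U$ by restricting along the zero section, so only invariance of $H^1_{\rm fppf}(-,\mu)$ is needed), you still have to know that this invariance survives Nisnevich sheafification and controls $\varinjlim_n\sS^n(G)$, which is not formal. The efficient repair is the one the paper already uses in its first step: since $k=\bar{k}$, $G$ satisfies Hypothesis \ref{hypothesis isotropy}, so $\Sing G$ is $\A^1$-local and $\sS(G)\simeq\pi_0^{\A^1}(G)$ with no stabilization argument; after that, your endgame (evaluate at $F=k(t)$, use triviality of the Whitehead group of the split group $G_{\rm sc}$ over $F$ and Steinberg's theorem to get $\sS(G)(F)\simeq H^1_{\rm fppf}(F,\mu)\neq\ast$) is a correct alternative to the paper's generic-point argument.
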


\begin{proof}
First, we assume that $G$ is semisimple and simply connected.  Since $G$ is semisimple and $k$ is algebraically closed, $G$ is an almost direct product of absolutely almost simple groups $H_1, \ldots, H_r$ over $k$.  Since $G$ is simply connected, it has no nontrivial isogenies, thus giving an isomorphism $\prod_{i=1}^{r} ~ H_i \xrightarrow{\sim}G$.

Since $k$ is algebraically closed, each of the $H_i$'s is a rational variety \cite[14.14, Remark]{Borel} and hence, is $R$-trivial.  By Theorem \ref{theorem IMRN}, we have $$\pi_0^{\A^1}(H_i)(F) \simeq H_i(F)/R \simeq \ast,$$ for every field extension $F$ of $k$.  By \cite[Lemma 6.1.3]{Morel-connectivity}, it follows that $G$ is $\A^1$-connected.

We now prove the converse. Let $G$ be an $\A^1$-connected reductive algebraic group. We have the exact sequence 
\[
1 \to G_{\rm der} \to G \to \rm corad (G) \to 1,
\]
where $G_{\rm der}$ is a semisimple group (the derived group of $G$) and ${\rm corad}(G)$ is a torus (the coradical of $G$).  Since $k$ is algebraically closed, $G$ clearly satisfies Hypothesis \ref{hypothesis isotropy}.  Therefore, by \cite[Theorem 4.3.1]{Asok-Hoyois-Wendt-2}, $\Sing G$ is $\A^1$-local and we have $\sS(G) \simeq \pi_0^{\A^1}(G)$.  Since tori are $\A^1$-rigid, the surjective map $G(k) \to {\rm corad}(G)(k)$ induces a surjective map $\sS(G)(k) \to {\rm corad}(G)(k)$.  Since $\sS(G)= \ast$ by hypothesis, it follows that ${\rm corad}(G)$ is trivial.  Thus, $G_{\rm der} \simeq G$, that is, $G$ is semisimple.

Let $G_{\rm sc}$ denote the simply connected central cover of $G$ so that we have a central isogeny $G_{\rm sc} \to G$, whose kernel will be denoted by $\mu$. As we noted before in the proof of Proposition \ref{proposition central isogeny}, the simplicial fiber sequence 
\[
G_{\rm sc} \to G \to B_{\rm fppf} \mu
\]
is also an $\A^1$-fiber sequence. Thus, the long exact sequence of $\A^1$-homotopy gives us the following exact sequence of pointed sheaves:
\[
\cdots \to \pi_1^{\A^1}(B_{\rm fppf} \mu) \to \pi_0^{\A^1}(G_{\rm sc}) \to \pi_0^{\A^1}(G) \to \pi_0^{\A^1}(B_{\rm fppf} \mu). 
\]
Since $G_{\rm sc}$ is a split semisimple simply connected group, we have $\pi_0^{\A^1}(G_{sc})(F) \simeq \sS(G_{\rm sc})(F) \simeq W(F, G_{\rm sc}) = \ast$, for every finitely generated field extension $F$ of $k$, by \cite[1.1.2]{Tits-Bourbaki}. 

Let $k(G)$ and $k(G_{\rm sc})$ denote the function fields of $G$ and $G_{\rm sc}$ respectively. Let $\eta: \Spec k(G) \to G$ denote the generic point of $G$.  By Lemma \ref{lemma classifying spaces}, $B_{\rm fppf} \mu$ is $\A^1$-local and so $\pi_0^{\A^1}(B_{\rm fppf} \mu)$ is the sheafification of the pointed presheaf $H^1_{\rm fppf}(-, \mu)$ (where the base-point corresponds to the trivial torsor).  Thus, $\pi_0^{\A^1}(B_{\rm fppf} \mu)(k(G)) = H^1_{\rm fppf}(k(G), \mu)$ and the image of $\eta$ in $H^1_{\rm fppf}(k(G), \mu)$ under the composition
\[
G(k(G)) \to \pi_0^{\A^1}(G)(k(G)) \to \pi_0^{\A^1}(B_{\rm fppf} \mu)(k(G))
\]
corresponds to the class of the $\mu$-torsor $G_{\rm sc} \times_{G, \eta} \Spec k(G) \to \Spec k(G)$.  Since $\pi_0^{\A^1}(G) = \ast$, the morphism of pointed sheaves $\pi_0^{\A^1}(G) \to \pi_0^{\A^1}(B_{\rm fppf} \mu)$ in the above $\A^1$-fiber sequence is trivial.  Hence, the image of $\eta$ under the composition 
\[
G(k(G)) \to \pi_0^{\A^1}(G)(k(G)) \to \pi_0^{\A^1}(B_{\rm fppf} \mu)(k(G))
\]
is equal to the base-point of $\pi_0^{\A^1}(B_{\rm fppf} \mu)(k(G))$, that is, the class of the trivial torsor.  Therefore, the $\mu$-torsor $G_{\rm sc} \times_{G, \eta} \Spec k(G) \to \Spec k(G)$ is trivial and thus admits a section.  Hence, the morphism $\eta: \Spec k(G) \to G$ can be lifted to a morphism $\eta': \Spec k(G_{\rm sc}) \to G_{\rm sc}$. The image of this morphism must be the generic point of $G_{\rm sc}$.  As a result, $\eta'$ induces a morphism $\Spec k(G) \to \Spec k(G_{\rm sc})$ which is a section of the morphism $\Spec k(G_{\rm sc}) \to \Spec k(G)$ induced by the isogeny $G_{\rm sc} \to G$. This gives us a sequence $k(G) \to k(G_{\rm sc}) \to k(G)$ of homomorphisms of fields such that the composition is the identity homomorphism on $k(G)$. Thus, we see that the homomorphism $k(G) \to k(G_{\rm sc})$ induced by the isogeny $G_{\rm sc} \to G$ is an isomorphism. We conclude that the isogeny $G_{\rm sc} \to G$ is a finite morphism of degree $1$ and hence it is an isomorphism. Thus, we see that $G$ is simply connected.
\end{proof}

Using Theorem \ref{theorem algebraically closed} and Lemma \ref{lemma weil restriction}, we now treat the general case.

\begin{theorem}
\label{theorem reductive connectedness}
Let $G$ be a reductive algebraic group over a field $k$ of characteristic $0$.  Then the following are equivalent: 
\begin{itemize}
\item[(1)]  $G$ is $\A^1$-connected;
\item[(2)]  $G$ is semisimple, simply connected and the almost $k$-simple factors of $G$ are $R$-trivial. 
\end{itemize}
\end{theorem}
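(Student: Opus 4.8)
The plan is to reduce both implications to Theorem~\ref{theorem IMRN} and to reuse, essentially verbatim, parts of the proof of Theorem~\ref{theorem algebraically closed}. Two standing remarks: since $\Char k = 0$ every field extension of $k$ is perfect, so Theorem~\ref{theorem IMRN} applies to \emph{all} field extensions $F/k$; and every reductive $k$-group is unirational over $k$ by \cite[Theorem~18.2]{Borel}, so its $k$-points are Zariski dense.

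For $(2)\Rightarrow(1)$ I would write $G \simeq H_1 \times \cdots \times H_r$ as the product of its almost $k$-simple factors. Since $R$-equivalence is compatible with finite products, $G(F)/R \simeq \prod_{i=1}^r H_i(F)/R$ for every field extension $F/k$, and this is trivial because each $H_i$ is $R$-trivial. By Theorem~\ref{theorem IMRN} we then get $\pi_0^{\A^1}(G)(F) \simeq G(F)/R = \ast$ for every field extension $F/k$, and \cite[Lemma~6.1.3]{Morel-connectivity} upgrades this to $\pi_0^{\A^1}(G) \simeq \ast$, i.e. $G$ is $\A^1$-connected. This is exactly the final step of the proof of Theorem~\ref{theorem algebraically closed}.

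For $(1)\Rightarrow(2)$, assume $G$ is $\A^1$-connected. I would first show $G$ is semisimple. The coradical $\mathrm{corad}(G) = G/G_{\mathrm{der}}$ is a torus, hence $\A^1$-rigid, so the canonical map $\mathrm{corad}(G)(k) \to \pi_0^{\A^1}(\mathrm{corad}(G))(k)$ is a bijection; applying $\pi_0^{\A^1}$ to the quotient morphism $q\colon G \to \mathrm{corad}(G)$, naturality of the canonical maps $X(k) \to \pi_0^{\A^1}(X)(k)$ shows that $q_*\colon G(k) \to \mathrm{corad}(G)(k)$ factors through $\pi_0^{\A^1}(G)(k)$, which is a single point by hypothesis. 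Thus $q_*(G(k))$ is a single point; but $q$ is dominant and $G(k)$ is Zariski dense, so $q_*(G(k))$ is Zariski dense in $\mathrm{corad}(G)$, which is impossible if $\mathrm{corad}(G)$ is positive-dimensional. Hence $\mathrm{corad}(G)$ is trivial and $G = G_{\mathrm{der}}$ is semisimple. Next, to show $G$ is simply connected I would run the second half of the proof of Theorem~\ref{theorem algebraically closed} word for word: with $G_{\mathrm{sc}} \to G$ the simply connected central cover with kernel $\mu$, the simplicial fiber sequence $G_{\mathrm{sc}} \to G \to B_{\mathrm{fppf}}\mu$ is an $\A^1$-fiber sequence, and since $\pi_0^{\A^1}(G) = \ast$ the induced map $\pi_0^{\A^1}(G) \to \pi_0^{\A^1}(B_{\mathrm{fppf}}\mu)$ is trivial; hence the $\mu$-torsor over $\Spec k(G)$ obtained by pulling back $G_{\mathrm{sc}} \to G$ along the generic point of $G$ is trivial in $H^1_{\mathrm{fppf}}(k(G), \mu) = \pi_0^{\A^1}(B_{\mathrm{fppf}}\mu)(k(G))$, so the generic point of $G$ lifts to $G_{\mathrm{sc}}$ and must hit the generic point of $G_{\mathrm{sc}}$; this produces a section of the finite field extension $k(G) \hookrightarrow k(G_{\mathrm{sc}})$ induced by the isogeny, forcing it to be an isomorphism. (That part of the proof of Theorem~\ref{theorem algebraically closed} never uses algebraic closedness.) Finally, with $G$ semisimple and simply connected, writing $G \simeq H_1 \times \cdots \times H_r$ as before, Theorem~\ref{theorem IMRN} gives $\ast = \pi_0^{\A^1}(G)(F) \simeq G(F)/R \simeq \prod_i H_i(F)/R$ for every field extension $F/k$, so $H_i(F)/R = \ast$ for all $i$ and all $F/k$; that is, every almost $k$-simple factor of $G$ is $R$-trivial.

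The step I expect to be the main obstacle is the proof that $G$ is semisimple. Over an algebraically closed field this was immediate from $\A^1$-locality of $\Sing G$ together with surjectivity of $G(k) \to \mathrm{corad}(G)(k)$; over a general field this surjectivity fails, so one instead has to exploit unirationality of reductive groups to guarantee that the coradical acquires enough rational points to run the density argument. An alternative would be to base change to $\bar k$ and invoke Theorem~\ref{theorem algebraically closed}, but that presupposes that $\A^1$-connectedness is preserved under extension of the base field; the argument above avoids that point.
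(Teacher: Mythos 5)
Your proposal is correct, but in the direction $(1)\Rightarrow(2)$ it takes a genuinely different route from the paper. The paper base-changes to $\-k$ and invokes Theorem \ref{theorem algebraically closed} to conclude that $G_{\-k}$, hence $G$, is semisimple and simply connected -- which implicitly uses that $\A^1$-connectedness of $G$ passes to $G_{\-k}$, exactly the point you flag and avoid; it then deduces $R$-triviality of the factors by unwinding the Weil restrictions $H_i \simeq R_{k_i/k}(G_i)$ via Lemma \ref{lemma weil restriction} and applying Theorem \ref{theorem IMRN} to the absolutely almost simple groups $G_i$. You instead argue entirely over $k$: semisimplicity via $\A^1$-rigidity of the coradical torus together with unirationality/Zariski density of $G(k)$ (a correct replacement for the surjectivity of $G(k)\to {\rm corad}(G)(k)$, which is special to the algebraically closed case), and simple connectedness by rerunning the torsor-triviality argument with the $\A^1$-fiber sequence $G_{\rm sc}\to G\to B_{\rm fppf}\mu$; your observation that this part of the proof of Theorem \ref{theorem algebraically closed} never uses algebraic closedness is accurate (the only place it enters there is the remark that $\pi_0^{\A^1}(G_{\rm sc})$ is trivial via splitness and Tits, which is not needed to conclude that the generic $\mu$-torsor is trivial). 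Your treatment of $R$-triviality and of $(2)\Rightarrow(1)$ applies Theorem \ref{theorem IMRN} directly to $G$, using that in characteristic $0$ all extensions are infinite and perfect and that $R$-equivalence classes commute with finite products; this is a legitimate streamlining of the paper's Weil-restriction computation, which in effect reproves this product compatibility. In sum: the paper's proof is shorter by leaning on Theorem \ref{theorem algebraically closed}, while yours is more self-contained over $k$ and dispenses with the unproved base-change invariance of $\A^1$-connectedness, at the cost of repeating the fiber-sequence argument.
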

\begin{proof}
(1) $\Rightarrow$ (2):   Let $\-k$ denote an algebraic closure of $k$.  By Theorem \ref{theorem algebraically closed}, $G_{\-k}$ is semisimple and simply connected.  Therefore, $G$ is semisimple and simply connected.  Now, there exist almost $k$-simple algebraic groups $H_1, \ldots, H_r$ such that $G \simeq H_1 \times \cdots \times H_r$.  Since $G$ is $\A^1$-connected, each of the $H_i$'s is $\A^1$-connected.  For each $i$, there exists a finite field extension $k_i/k$ and an absolutely almost simple group $G_i$ such that $R_{k_i/k}(G_i) \simeq H_i$.  Note that
\[
H_i(F)/R = R_{k_i/k}(G_i)(F)/R \simeq G_i(F \tensor_k  k_i)/R
\]
By Theorem \ref{theorem IMRN}, we have $G_i(F \tensor_k  k_i)/R \simeq \pi_0^{\A^1}(G_i)(F \tensor_k k_i)$ for each $i$, since each $G_i$ is semisimple, simply connected, absolutely almost simple.  Hence, for every $i$, we have
\[
H_i(F)/R \simeq \pi_0^{\A^1}(G_i)(F \tensor_k k_i) \simeq \pi_0^{\A^1}(R_{k_i/k}(G_i))(F) \simeq \pi_0^{\A^1}(H_i)(F) \simeq \ast.
\]

\noindent (2) $\Rightarrow$ (1):  Since $G$ is semisimple, it is an almost direct product of almost $k$-simple groups $H_1, \ldots, H_r$.  For each $i$, there exists a finite field extension $k_i/k$ and an absolutely almost simple group $G_i$ such that the Weil restriction $R_{k_i/k}(G_i)$ is isomorphic to $H_i$.  Since $G$ is simply connected, it has no nontrivial isogenies, thus giving isomorphisms
\[
\prod_{i=1}^{r} ~ R_{k_i/k}(G_i) \xrightarrow{\simeq} \prod_{i=1}^{r} ~ H_i \xrightarrow{\simeq}G.
\]
By Theorem \ref{theorem IMRN}, $\pi_0^{\A^1}(G_i)(F) = G_i(F)/R$.  Since for every $i$, the group $H_i=R_{k_i/k}(G_i)$ is $R$-trivial by hypothesis, we have
\[
\pi_0^{\A^1}(R_{k_i/k}(G_i))(F) \simeq \pi_0^{\A^1}(G_i)(F \tensor_k k_i) \simeq G_i(F \tensor_k  k_i)/R \simeq R_{k_i/k}(G_i)(F)/R \simeq \ast.
\]
By \cite[Lemma 6.1.3]{Morel-connectivity}, it follows that every $R_{k_i/k}(G_i)$ is $\A^1$-connected.  Consequently, $G$ is $\A^1$-connected.  This completes the proof of the theorem.
\end{proof}

\subsection{Abelian-ness of \texorpdfstring{$\A^1$}{A1}-connected components of certain reductive algebraic groups}

Recall from Section \ref{section preliminaries algebraic groups} that $\A^1$-connected components of semisimple, simply connected groups agree with their $R$-equivalence classes (Corollary \ref{corollary IMRN}).  In this subsection, as an application of Theorem \ref{theorem reductive connectedness}, we show that the sheaf of $\A^1$-connected components of certain reductive groups is a sheaf of abelian groups.

Let $G$ be a semisimple algebraic group over a field of characteristic $0$.  The simply connected cover $G_{\rm sc}$ of $G$ gives rise to a central isogeny
\[
G_{\rm sc} \to G,
\]
whose kernel $\mu$ is a finite abelian group.  By Lemma \ref{lemma classifying spaces}, we get an $\A^1$-fiber sequence
\[
G_{\rm sc} \to G \to B_{\text{fppf}}\mu
\]
The associated long exact sequence of $\A^1$-homotopy groups yield the following exact sequence:
\[
\cdots \to \pi_0^{\A^1}(G_{\rm sc}) \to \pi_0^{\A^1}(G) \to \pi_0^{\A^1}(B_{\rm fppf} \mu). 
\]
We first show that the morphism $\pi_0^{\A^1}(G) \to \pi_0^{\A^1}(B_{\text{fppf}} \mu)$ is a homomorphism of group sheaves.  Given a smooth scheme $U$ over $k$, and an element $s \in G(U)$, we obtain a $\mu$-torsor $G'_s:= G'\times_{G,s} U \to U$, which defines an element of $H^1_{\text{fppf}}(U,\mu) = \pi_0(B_{\text{fppf}}\mu)(U)$. Thus, we obtain a morphism of sheaves $G \to \pi_0(B_{\text{fppf}}\mu)$. This morphism factors through the quotient morphism $G \to \pi_0^{\A^1}(G)$, inducing the morphism $\pi_0^{\A^1}(G) \to \pi_0(B_{\text{fppf}}\mu)$ mentioned above. Thus, it suffices to show that the map $G(U) \to H^1_{\text{fppf}}(U,\mu)$ is a homomorphism.

For an element $s \in G(U)$, suppose $\{U_i \to U\}_{i\in I}$ is an fppf cover which trivializes the torsor $G'_s$. Thus, for every $i$, there exists an element $s'_i \in G'(U_i)$ such that the map $G'(U) \to G(U)$ maps $s'_i$ to $s|_{U_i}$. For any two indices $i,j \in I$, we write $U_{ij}:= U_i \times_U U_j$ and define $s_{ij} = (s'_i|_{U_{ij}})(s'_j|_{U_{ij}})^{-1}$. Then, the collection $(s_{ij})_{i,j}$ is a $1$-cocycle which represents the isomorphism class $[G_s]$ of $G_s$ in $H^1_{\text{fppf}}(U,\mu)$.

Given elements $s,t \in G(U)$, we may choose an fppf cover $\{U_i \to U\}_{i \in I}$ that trivializes both $G'_s$ and $G'_t$. It is easy to see (using the fact that $\mu$ is abelian) that the collection $(s_{ij}t_{ij})_{i,j}$ is a $1$-cocycle which defines a $\mu$-torsor over $U$, which we denote by $G'_{s \ast t}$. The binary operation $([G'_s],[G'_t]) \mapsto [G'_{s \ast t}]$ is precisely the one that defines the group structure on $H^1_{\text{fppf}}(U,\mu)$. If $st$ denotes the product of $s$ and $t$ in $G(U)$, we wish to prove that $[G'_{st}] = [G'_{s \ast t}]$. In other words, we wish to prove that the torsor $G_{st}$ can be represented by the $1$-cocycle $(s_{ij}t_{ij})_{i,j}$. 

For every $i \in I$, the element $s'_i t'_i \in G'(U_i)$ maps to $s|_{U_i} t|_{U_i} \in G(U_i)$. Thus, the cover $\{U_i \to U\}_{i \in I}$ trivializes the torsor $G'_{st}$. So, the class $[G'_{st}]$ is represented by the $1$-cocycle $\{u_{ij}\}_{i,j}$ where 
\begin{align*}
u_{ij} & := (s'_i|_{U_{ij}})(t'_i|_{U_{ij}})(t'_j|_{U_{ij}})^{-1}(s'_j|_{U_{ij}})^{-1} \\
       & = (s'_i|_{U_{ij}})(s'_j|_{U_{ij}})^{-1}(t'_i|_{U_{ij}})(t'_j|_{U_{ij}})^{-1} \\
       & = s_{ij}t_{ij},
\end{align*}
where the second equality follows from the fact that $(t'_i|_{U_{ij}})^{-1}(t'_j|_{U_{ij}})^{-1}$ lies in $\mu(U_{ij})$ and hence commutes with $(s'_j|_{U_{ij}})^{-1}$. 

We are now ready to show the main result of this subsection.

\begin{proposition}
\label{proposition pi0 abelian}
Let $G$ be a semisimple algebraic group over an infinite perfect field such that its simply connected cover $G_{\rm sc}$ is $R$-trivial.  Then $\pi_0^{\A^1}(G)$ is a sheaf of abelian groups.
\end{proposition}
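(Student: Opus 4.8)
The plan is to realise $\pi_0^{\A^1}(G)$ as a subsheaf of groups of a sheaf of \emph{abelian} groups, namely $\pi_0^{\A^1}(B_{\rm fppf}\mu)$, where $\mu = \ker(G_{\rm sc} \to G)$. Almost all the ingredients have been assembled above. By Lemma~\ref{lemma classifying spaces} we have the $\A^1$-fiber sequence $G_{\rm sc} \to G \to B_{\rm fppf}\mu$; since $\mu$ is central in $G_{\rm sc}$ it is a finite abelian group scheme of multiplicative type, and since $B_{\rm fppf}\mu$ is $\A^1$-local the sheaf $\pi_0^{\A^1}(B_{\rm fppf}\mu) = \pi_0^s(B_{\rm fppf}\mu)$ is the Nisnevich sheafification of $H^1_{\rm fppf}(-,\mu)$ by \cite[Lemma 2.2.2]{Asok-Hoyois-Wendt-2}, hence a sheaf of abelian groups. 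The discussion preceding this proposition has produced a \emph{homomorphism} of sheaves of groups $\phi \colon \pi_0^{\A^1}(G) \to \pi_0^{\A^1}(B_{\rm fppf}\mu)$, induced on $\pi_0^{\A^1}$ by $s \mapsto [G'_s]$. It therefore remains only to show that $\phi$ is a monomorphism, for then $\pi_0^{\A^1}(G)$ embeds into an abelian group sheaf and is itself abelian.

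First I would check that $\pi_0^{\A^1}(G_{\rm sc}) \simeq \ast$. Write $G_{\rm sc} \simeq \prod_{i=1}^{r} H_i$ with each $H_i$ almost $k$-simple and simply connected; since $R$-equivalence classes in a product are tuples of $R$-equivalence classes, $G_{\rm sc}(F)/R \simeq \prod_{i=1}^{r} H_i(F)/R$ for every field extension $F/k$, so the hypothesis that $G_{\rm sc}$ is $R$-trivial passes to each $H_i$. By Theorem~\ref{theorem reductive connectedness} (equivalently, by Theorem~\ref{theorem IMRN} together with \cite[Lemma 6.1.3]{Morel-connectivity}) it follows that $G_{\rm sc}$ is $\A^1$-connected.

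Next I would invoke the long exact sequence of $\A^1$-homotopy sheaves of the $\A^1$-fiber sequence above:
\[
\cdots \to \pi_1^{\A^1}(B_{\rm fppf}\mu) \to \pi_0^{\A^1}(G_{\rm sc}) \to \pi_0^{\A^1}(G) \xrightarrow{\phi} \pi_0^{\A^1}(B_{\rm fppf}\mu).
\]
Exactness at $\pi_0^{\A^1}(G)$ says that the preimage under $\phi$ of the base-point equals the image of $\pi_0^{\A^1}(G_{\rm sc}) \simeq \ast$, so $\ker\phi$ is the trivial subsheaf. Because $\phi$ is a homomorphism of sheaves of groups, trivial kernel implies that $\phi$ is a monomorphism, which finishes the argument.

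The only slightly delicate point, I think, is the passage from what the exact sequence literally gives --- that $\phi^{-1}(\text{base-point}) = \ast$ as pointed sheaves --- to the injectivity of $\phi$: this genuinely uses that $\phi$ is a group-sheaf homomorphism, which in turn rests on the $\mu$-abelian cocycle computation carried out before the proposition and on $\Sing$ and $L_{\A^1}$ commuting with finite products, so that $\pi_0^{\A^1}(G)$ inherits a group-sheaf structure from $G$. Everything else is a bookkeeping of results already available.
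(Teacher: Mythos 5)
Your proposal is correct and follows essentially the same route as the paper: the $\A^1$-fiber sequence $G_{\rm sc} \to G \to B_{\rm fppf}\mu$, the vanishing of $\pi_0^{\A^1}(G_{\rm sc})$ via $R$-triviality of the factors, and the embedding of $\pi_0^{\A^1}(G)$ as a group sheaf into the abelian sheaf $\pi_0^{\A^1}(B_{\rm fppf}\mu)$ using the cocycle-level homomorphism established beforehand. Your extra remark that injectivity requires the group-homomorphism property (not just exactness of pointed sheaves) is a fair point of care, but it is the same argument the paper intends.
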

\begin{proof}
The simply connected cover $G_{\rm sc}$ of $G$ gives rise to an $\A^1$-fiber sequence
\[
G_{\rm sc} \to G \to B_{\text{fppf}}\mu
\]
as described in the discussion above.  The associated long exact sequence of $\A^1$-homotopy groups yield the following exact sequence:
\[
\cdots \to \pi_0^{\A^1}(G_{\rm sc}) \to \pi_0^{\A^1}(G) \to \pi_0^{\A^1}(B_{\rm fppf} \mu). 
\]
Since $G_{\rm sc}$ is $R$-trivial, so are its almost $k$-simple components.  By Theorem \ref{theorem reductive connectedness}, we then have $\pi_0^{\A^1}(G_{\rm sc}) = \ast$.  Hence we have an injection of sheaves $\pi_0^{\A^1}(G) \hookrightarrow \pi_0^{\A^1}(B_{\rm fppf} \mu)$.  Since $\pi_0(B_{\rm fppf} \mu)(-) = H^1_{\rm fppf}(-, G)$ by \cite[Lemma 2.2.2]{Asok-Hoyois-Wendt-2} and since $B_{\text{fppf}}\mu$ is $\A^1$-local, we conclude that $\pi_0^{\A^1}(B_{\rm fppf} \mu)$ is a sheaf of abelian groups.  Since $\pi_0^{\A^1}(G) \to \pi_0^{\A^1}(B_{\text{fppf}} \mu)$ is a homomorphism of group sheaves, we conclude that $\pi_0^{\A^1}(G)$ is a sheaf of abelian groups.
\end{proof}

\begin{remark}
\label{remark pi0(G) abelian}
It is an open question whether $\pi_0^{\A^1}(G)$ is always a sheaf of abelian groups, for a reductive group $G$ over a field.
\end{remark}

\subsection*{Acknowledgements}
We are grateful to Marc Hoyois for a very helpful correspondence; especially, for pointing out the need to use the \emph{fppf} topology and for suggesting that Lemma \ref{lemma classifying spaces} can be proven by mimicking the proof of \cite[\textsection 4.3, Proposition 3.1]{Morel-Voevodsky}.  We also thank Aravind Asok for comments and discussions and the referee for a careful reading of the paper as well as for a number of suggestions that improved the presentation.  Finally, we warmly thank Fabien Morel for stimulating discussions, suggestions and encouragement.  Part of this work was done when the second-named author was visiting Tata Institute of Fundamental Research, Mumbai, India; he thanks the institute for hospitality.

\end{document}